\newif\ifdraft\draftfalse
\def\@begintheorem#1#2[#3]{%
    \def\naam{#1}
  \deferred@thm@head{\the\thm@headfont \thm@indent
    \@ifempty{#1}{\let\thmname\@gobble}{\let\thmname\@iden}%
    \@ifempty{#2}{\let\thmnumber\@gobble}{\let\thmnumber\@iden}%
    \@ifempty{#3}{\let\thmnote\@gobble}{\let\thmnote\@iden}%
    \thm@swap\swappedhead\thmhead{#1}{#2}{#3}%
    \the\thm@headpunct
    \thmheadnl 
    \hskip\thm@headsep
  }%
  \ignorespaces}
\newcommand{\kantlijndraft}[1]{\ifdraft\hspace{-\lastskip}%
\vadjust{\vspace{-1mm}\smash{\llap{{\tt#1}\hspace{8mm}}}\vspace{1mm}}\fi}
\def\voegToe#1#2#3{\immediate\write1{\string\newlabel{#1}{{#2}{#3}}}}
\newcommand{\thlabel}[1]{\voegToe{#1}{\naam\noexpand~\thetheorem-
}{\thepage}\kantlijndraft{#1}}
\renewcommand{\label}[1]{\voegToe{#1}{\@currentlabel}{\thepage}\kantlijndraft{#1}}
\newtheorem{theorem}{Theorem}[section]
\newtheorem{corollary}[theorem]{Corollary}
\newtheorem{question}[theorem]{Question}
\newtheorem{proposition}[theorem]{Proposition}
\theoremstyle{definition}
\newtheorem{example}[theorem]{Example}
\newtheorem{definition}[theorem]{Definition}
\theoremstyle{remark}
\numberwithin{equation}{section}
\newtheorem{claim2}{\sc Claim}
\newcommand{\sse}{\subseteq}                           
\newcommand{\minus}{\backslash}
\newcommand{\Un}{\bigcup}
\newcommand{\un}{\cup}
\newcommand{\Meet}{\bigcap}
\newcommand{\meet}{\cap}
\newcommand{\es}{\varnothing}                          
\newcommand{\closure}[1]{\ensuremath{\overline{#1}}}
\newcommand{\scr}[1]{\ensuremath{\mathcal{#1}}}
\def\cprime{$'$}
\def\cont{\mathfrak{c}}
\def\sapirovskii{{\v{S}}apirovski{\u\i}}
\def\arhangelskii{Arhangel{\cprime}ski{\u\i}}
\def\lindelof{Lindel\"of}
\def\juhasz{Juh{\'a}sz}
\begin{document}

\title{On weakening tightness to weak tightness}

\author{A. Bella}\address{Department of Mathematics, University
of Catania, Viale A. Doria 6, 95125 Catania, Italy}
\email{bella@dmi.unict.it}
\author{N. Carlson}\address{Department of Mathematics, California
Lutheran University, 60 W. Olsen Rd, MC 3750, Thousand Oaks, CA
91360 USA}
\email{ncarlson@callutheran.edu}

\subjclass[2010]{54A25, 54B10}

\keywords{cardinality bounds, cardinal invariants, countably
tight space, homogeneous space, weak tightness}

\begin{abstract}
The weak tightness $wt(X)$ of a space $X$ was introduced~in
\cite{Car2018} with the property $wt(X)\leq t(X)$. We investigate
several well-known results concerning $t(X)$ and consider whether
they extend to the weak tightness setting. First we give an
example of a non-sequential compactum $X$ such that
$wt(X)=\aleph_0<t(X)$ under  $2^{\aleph_0}=2^{\aleph_1}$. In
particular,  this demonstrates the celebrated Balogh's
Theorem~\cite{Bal1989} does not hold in general if countably
tight is replaced with weakly countably tight. Second, we
introduce the notion of an S-free sequence and show that if $X$
is a homogeneous compactum then $|X|\leq 2^{wt(X)\pi_\chi(X)}$.
This refines a theorem of De la Vega~\cite{DeLaVega2006}. In the
case where the cardinal invariants involved are countable, this
also represents a variation of a theorem of~\juhasz~and van
Mill~\cite{JVM2018}. Third, we show that if $X$ is a $T_1$ space,
$wt(X)\leq\kappa$, $X$ is $\kappa^+$-compact, and
$\psi(\closure{D},X)\leq 2^\kappa$ for any $D\sse X$ satisfying
$|D|\leq 2^\kappa$, then a) $d(X)\leq 2^\kappa$ and b) $X$ has at
most $2^\kappa$-many $G_\kappa$-points. This is a variation of
another theorem of Balogh~\cite{Bal2003}. Finally, we show that
if $X$ is a regular space, $\kappa=L(X)wt(X)$, and $\lambda$ is a
caliber of $X$ satisfying $\kappa<\lambda\leq
\left(2^{\kappa}\right)^+$, then $d(X)\leq 2^{\kappa}$. This
extends of theorem of~\arhangelskii~\cite{Arh2000}.
\end{abstract}

\maketitle

\section{Introduction.}
The cardinal function $t(X)$, the \emph{tightness} of a
topological space $X$, is the least infinite cardinal $\kappa$
such that whenever $A\sse X$ and $x\in\closure{A}$ then there
exists $B\sse A$ such that $|B|\leq\kappa$ and $x\in\closure{B}$.
Motivated by results of~\juhasz~and van Mill in~\cite{JVM2018},
the cardinal function $wt(X)$, the \emph{weak tightness of }$X$,
was introduced in~\cite{Car2018}. (See
Definition~\ref{weakTightness} below). In~\cite{Car2018} it was
shown that $|X|\leq 2^{L(X)wt(X)\psi(X)}$ for any Hausdorff space
$X$. As $wt(X)\leq t(X)$ for any space $X$, this improved the
\arhangelskii-\sapirovskii~cardinality bound for Hausdorff
spaces~\cite{Arh1969},~\cite{Sap1972}.

In this study we explore other fundamental cardinal function
results involving tightness and consider whether $t(X)$ can be
replaced with $wt(X)$. In 1989 Balogh~\cite{Bal1989} answered the
Moore-Mr\'owca Problem by showing that every countably tight
compactum is sequential under the Proper Forcing Axiom (PFA).
In~\ref{example}, we give an example demonstrating that countably
tight cannot be replaced with weakly countably tight in Balogh's
Theorem. It is also a consistent  example of a compact group for
which $wt(X)=\aleph_0<t(X)$.

In ~\cite{DeLaVega2006}, R. de la Vega answered a long-standing
question of~\arhangelskii~by showing that $|X|\leq 2^{t(X)}$ for
a homogeneous compactum $X$. Recall that a space $X$ is
\emph{homogeneous} if for all $x,y\in X$ there exists a
homeomorphism $h:X\to Y$ such that $h(x)=y$. Roughly, a space is
homogeneous if all points in the space share identical
topological properties. In~\cite{JVM2018}, \juhasz~and van Mill
introduced new techniques and gave a certain improvement of De la
Vega's Theorem in the case where $X$ is a countable union of
countably tight subspaces. In Theorem~\ref{cpthomog}, we prove
that $|X|\leq 2^{wt(X)\pi_\chi(X)}$ for any homogeneous
compactum. In the case where the cardinal invariants involved are
countable, Theorem~\ref{cpthomog} represents a variation of
Theorem 4.1 in \cite{JVM2018}. As $wt(X)\leq t(X)$ for any space
and $\pi_\chi(X)\leq t(X)$ for any compact space, our result
gives a general improvement of De la Vega's Theorem. To this end
we introduce the notion of an S-free sequence in
Definition~\ref{sfree}. In compact spaces these sequences play a
role for $wt(X)$ similar to the role free sequences play for
$t(X)$.

S-free sequences are also used to replace $t(X)$ with $wt(X)$ in
a second theorem of Balogh~\cite{Bal2003}. In
Theorem~\ref{Balogh} a closing-off argument is used to show that
if $X$ is $T_1$, $\kappa$ is a cardinal, $wt(X)\leq\kappa$, $X$
is $\kappa^+$-compact, and $\psi(\closure{D},X)\leq 2^\kappa$ for
any $D\sse X$ satisfying $|D|\leq 2^\kappa$, then $d(X)\leq
2^\kappa$ and there are at most $2^\kappa$-many
$G_\kappa$-points. This result is related to Lemma 3.2
in~\cite{JVM2018} and produces an alternative proof that $|X|\leq
2^{L(X)wt(X)\psi(X)}$ for a Hausdorff space $X$.

In~\cite{Arh2000},~\arhangelskii~considered the notion of a
caliber and showed that if $X$ is a Lindel\"of, regular space,
$t(X)=\aleph_0$, and $\aleph_1$ is a caliber of $X$, then
$d(X)\leq\cont$. Recall a cardinal $\kappa$ is a \emph{caliber}
of a space X if every family of open sets of cardinality $\kappa$
has a subfamily of cardinality $\kappa$ with non-empty
intersection. We show in Theorem~\ref{caliber} that in this
result $t(X)=\aleph_0$ can be replaced with $wt(X)=\aleph_0$. It
also extends to the uncountable case where the cardinal
$\kappa=L(X)wt(X)$ is used.

Finally, we mention that  a rather easy modification of  the
argument used in Theorem 2.8 of \cite{Car2018}  proves $|X|\le
2^{aL_c(X)wt(X)\psi_c(X)}$ for every Hausdorff  space $X$.  This
generalizes to the weak tightness setting the analogous
inequality
established in \cite{belcam}.

By \emph{compactum} we mean a compact, Hausdorff space. For all
undefined notions we refer the reader to~\cite{Engelking}
and~\cite{Juhasz}.

\section{An example and Balogh's Theorem}
The weak tightness $wt(X)$~\cite{Car2018} of a space $X$ is
defined as follows.

\begin{definition}\label{weakTightness}
Let $X$ be a space. Given a cardinal $\kappa$ and $A\sse X$, the
$\kappa$-\emph{closure} of $A$ is defined as $cl_\kappa
A=\Un_{B\in[A]^{\leq\kappa}}\closure{B}$. The \emph{weak
tightness} $wt(X)$ of $X$ is the least infinite cardinal $\kappa$
for which there is a cover $\scr{C}$ of $X$ such that
$|\scr{C}|\leq 2^\kappa$ and for all $C\in\scr{C}$, a)
$t(C)\leq\kappa$ and b) $X=cl_{2^\kappa}C$. We say that $X$ is
\emph{weakly countably tight} if $wt(X)=\aleph_0$.
\end{definition}

The condition b) above can be difficult to work with. Instead a
convenient tool for demonstrating that the weak tightness of a
space $X$ is at most $\kappa$ was given in~\cite{Car2018}. This
replaces b) with the more tractable condition that $C$ is dense
in $X$ under mild assumptions involving $t(X)$ or
$\pi_\chi(X)$.

\begin{proposition}[\cite{Car2018}, Lemma 2.10] \label{wteasy}
Let $X$ be a space, $\kappa$ a cardinal, and $\scr{C}$ a cover of
$X$ such that $|\scr{C}|\leq 2^\kappa$, and for all
$C\in\scr{C}$, $t(C)\leq\kappa$ and $C$ is dense in $X$. If
$t(X)\leq 2^\kappa$ or $\pi_\chi(X)\leq 2^\kappa$ then
$wt(X)\leq\kappa$.
\end{proposition}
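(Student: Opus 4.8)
The plan is to prove Proposition~\ref{wteasy}: assuming the hypotheses, we must exhibit a cover $\scr{D}$ of $X$ with $|\scr{D}|\leq 2^\kappa$ such that each $D\in\scr{D}$ has tightness at most $\kappa$ and satisfies $X=cl_{2^\kappa}D$. The naive attempt is to take $\scr{D}=\scr{C}$ directly; conditions (a) and the cardinality bound are immediate, so the only thing to verify is condition (b), namely that for each dense $C\in\scr{C}$ with $t(C)\leq\kappa$ we have $X=cl_{2^\kappa}C$. This is exactly the point where the extra hypothesis $t(X)\leq 2^\kappa$ or $\pi_\chi(X)\leq 2^\kappa$ must be used, and it is the heart of the argument.

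First, suppose $t(X)\leq 2^\kappa$. Fix $x\in X$. Since $C$ is dense, $x\in\closure{C}$, so by tightness of $X$ there is a set $B\sse C$ with $|B|\leq 2^\kappa$ and $x\in\closure{B}$. Then $x\in cl_{2^\kappa}C$ by definition of the $2^\kappa$-closure, since $B\in[C]^{\leq 2^\kappa}$. As $x$ was arbitrary, $X=cl_{2^\kappa}C$, and $\scr{C}$ witnesses $wt(X)\leq\kappa$.

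Next, suppose instead $\pi_\chi(X)\leq 2^\kappa$. Fix $x\in X$ and let $\scr{B}$ be a local $\pi$-base at $x$ with $|\scr{B}|\leq 2^\kappa$. For each $V\in\scr{B}$, density of $C$ gives a point $c_V\in V\meet C$. Put $B=\{c_V:V\in\scr{B}\}\sse C$; then $|B|\leq 2^\kappa$. I claim $x\in\closure{B}$: any open neighborhood $U$ of $x$ contains some $V\in\scr{B}$ with $V\sse U$, hence $c_V\in V\sse U$, so $U\meet B\neq\es$. Thus $x\in\closure{B}\sse cl_{2^\kappa}C$, and again $X=cl_{2^\kappa}C$.

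In either case the cover $\scr{C}$ itself witnesses $wt(X)\leq\kappa$, so the proposition follows. There is no real obstacle here; the only subtlety is recognizing that condition (b) in Definition~\ref{weakTightness} allows parameter $2^\kappa$ (rather than $\kappa$) in the closure operation, which is precisely what makes either of the two side hypotheses sufficient to upgrade "dense" to "$cl_{2^\kappa}$-dense." One should also note the degenerate possibility that the resulting cardinal is finite; as in Definition~\ref{weakTightness} we take $\kappa$ to be at least $\aleph_0$, so $wt(X)$ is well-defined and the stated inequality $wt(X)\leq\kappa$ is the correct conclusion.
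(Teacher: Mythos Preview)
Your proof is correct. The paper itself does not prove this proposition; it is quoted from \cite{Car2018} with a citation and no argument. Your approach---verifying directly that the given cover $\scr{C}$ already witnesses $wt(X)\leq\kappa$ by checking condition (b) via either the tightness or the $\pi$-character hypothesis---is the natural and essentially unique way to do it, and both cases are handled cleanly.
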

A $\sigma$-compact space $X$ for which $wt(X)<t(X)$ is provided
in \cite{Car2018}. In this section we will see that at least
consistently there is a compact example.

In 1989 Balogh~\cite{Bal1989} answered the Moore-Mr\'owca Problem
by proving under the Proper Forcing Axiom (PFA) that every
compactum of countable tightness is sequential. Since PFA implies
$2^{\aleph_0}=2^{\aleph_1}$,
 the following example demonstrates that in that theorem
countable tightness cannot be replaced with weak countable
tightness. It is also, more generally, a consistent  example of a
compact group $X$ such that $wt(X)=\aleph_0<t(X)$.
\begin{example}\label{example}
Assume  $2^{\aleph_0}=2^{\aleph_1}$  and let $X$ be the Cantor
cube $2^{\omega_1}$. Recall that $X$ has tightness
$t(X)=\aleph_1$ and is therefore not sequential. For each $x\in
X$, let $C_x=\{y\in X:|\{\alpha<\omega_1:x(\alpha)\neq
y(\alpha)\}|<\omega\}$. Observe for each $x\in X$ that $C_x$ is a
countably tight, dense subspace of $X$ and that
$\scr{C}=\{C_x:x\in X\}$ is a cover of $X$. Furthermore, our set-
theoretic assumption implies  $|\scr{C}|\leq 2^\omega$. As
$t(X)=\aleph_1\leq 2^\omega$, by Proposition~\ref{wteasy} it
follows that $wt(X)=\aleph_0$. Thus $X$ is a non-sequential
compact group of tightness $\aleph_1$ such that, under
$2^{\aleph_0}=2^{\aleph_1}$, $X$ is weakly countably tight. Note
also that $X$ is homogeneous.
\end{example}
As $\pi_\chi(X)=\aleph_1$, this example also shows that  \v
Sapirovski\u\i's   theorem
$\pi\chi(X)\le t(X)$ for any compactum $X$ may fail by using
the weak tightness, answering Question 4.8 in~\cite{Car2018}.

\juhasz~and van Mill~\cite{JVM2018} asked if there is a homogeneous compactum
that is the countable union of countably tight subspaces
($\sigma$-CT) yet not countably tight. They observe that such a
space can only exist in a model in which $2^{\aleph_0}=2^{\aleph_1}$. We
note that the above example is not $\sigma$-CT. (For if it were,
by Lemma 2.4 in~\cite{JVM2018} $X$ would have a point of
countable $\pi$-character). However, under $2^{\aleph_0}=2^{\aleph_1}$,
$X$ is nevertheless a $2^{\aleph_0}$-union of countably tight dense
subspaces.

In light of our example, we ask the following.

\begin{question}
Does there exist a compactum $X$ with $wt(X)<t(X)$ in ZFC?
\end{question}

\begin{question}
Does there exists a compactum that is not countably tight and is the countable union of dense countably tight subspaces?
\end{question}

\section{S-free sequences and homogeneous compacta}
The goal in this section is to give an improved bound for the
cardinality of any homogeneous compactum. For a space $X$, the
notion of a $\scr{C}$-saturated subset of a
space $X$ was introduced in \cite{JVM2018}. Given a cover
$\scr{C}$ of $X$, a subset $A\sse X$ is
$\scr{C}$-\emph{saturated} if $A\meet C$ is dense in $A$ for
every $C\in\scr{C}$. It is clear that the union of
$\scr{C}$-saturated subsets is $\scr{C}$-saturated.

Let $X$ be a space. If $wt(X)\leq\kappa$ then there is a cover
$\scr{C}$ of $X$ witnessing the properties
in~\ref{weakTightness}. By Lemma 3.2 in \cite{JVM2018}, for all
$x\in X$ we can fix a $\scr{C}$-saturated set $S(x)$ such that
$x\in S(x)$ and $|S(x)|\leq 2^\kappa$. For all $A\sse X$, set
$S(A)=\Un\{S(x):x\in A\}$ and note that $S(A)$ is
$\scr{C}$-saturated. We will use the existence of the cover
$\scr{C}$ and the family $\{S(A):A\sse X\}$ implicitly in proofs
where $wt(X)\leq\kappa$ without direct mention.

Recall that for a cardinal $\kappa$ and a space $X$, a set
$\{x_\alpha:\alpha<\kappa\}\sse X$ is a \emph{free sequence} if
$\closure{\{x_\beta:\beta<\alpha\}}\meet\closure{\{x_\beta:\alpha\leq\beta<\kappa\}}=\es$ for all $\alpha<\kappa$. We define
the notion of an S-free sequence below. Note that every S-free
sequence is a free sequence and that an S-free sequence is
defined  by a cover witnessing $wt(X)$.

\begin{definition}\label{sfree}
Let $wt(X)=\kappa $. A set
$\{x_\alpha:\alpha<\lambda \}$ is an \emph{S-free sequence} if
$\closure{S(\{x_\beta:\beta<\alpha\})}\meet\closure{\{x_\beta:\alpha\leq\beta<\lambda \}}=\es$ for all $\alpha<\lambda $.
\end{definition}

For a compactum $X$, it is well known that $t(X)=F(X)$, where
$F(X)=\sup\{|E|: E\textup{ is a free sequence in }X\}$. The
following proposition demonstrates that S-free sequences play a
similar role for $wt(X)$ in $\kappa^+$-compact spaces. Recall
that a space $X$ is $\kappa$-compact if every subset of
cardinality $\kappa$ has a complete accumulation point.

\begin{proposition}\label{Sfree}
If $X$ is $\kappa^+$-compact where $\kappa=wt(X)$ then $X$ does
not contain an S-free sequence of length $\kappa^+$.
\end{proposition}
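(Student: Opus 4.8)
The plan is to argue by contradiction. Suppose $\{x_\alpha : \alpha < \kappa^+\}$ is an S-free sequence in $X$. First I would apply $\kappa^+$-compactness: the set $\{x_\alpha : \alpha < \kappa^+\}$ has cardinality $\kappa^+$, so it has a complete accumulation point $p \in X$; in particular, for every neighborhood $U$ of $p$, the set $\{\alpha < \kappa^+ : x_\alpha \in U\}$ has cardinality $\kappa^+$, hence is cofinal in $\kappa^+$. Next I would locate $p$ relative to the cover $\scr{C}$ witnessing $wt(X) = \kappa$ and the associated saturated sets. Fix some $C \in \scr{C}$ with $p \in C$ (the cover is a cover of $X$); since $t(C) \leq \kappa$ and $p$ lies in the closure of the set of tail-points that fall in $C$-neighborhoods... — more carefully, I would work with $S(p)$. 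Since $\scr{C}$-saturated sets behave well, the key will be to find, for some $\alpha$, a point witnessing membership in both $\closure{S(\{x_\beta : \beta < \alpha\})}$ and $\closure{\{x_\beta : \alpha \leq \beta < \kappa^+\}}$, contradicting the S-free condition.

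The cleaner route: since $t(C) \leq \kappa$ and $p$ is a complete accumulation point of the tail $\{x_\beta : \beta \geq \alpha\}$ for every $\alpha < \kappa^+$, I want to replace the accumulation point $p$ by a point $q$ in the appropriate $C$ and a set of size $\leq \kappa$ accumulating to it. Concretely, consider $A = \{x_\alpha : \alpha < \kappa^+\}$ and its complete accumulation point $p$. Pick $C \in \scr{C}$ with $p \in C$; then $p \in \closure{A \meet C}$ is not immediate, but $C$ is dense (under the convenient characterization, or from condition b) applied suitably) — actually I would instead use that $S(A)$ is $\scr{C}$-saturated together with $p \in \closure{A} \sse \closure{S(A)}$, so $p \in \closure{S(A) \meet C}$. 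Since $t(C) \leq \kappa$, there is $B \sse S(A) \meet C$ with $|B| \leq \kappa$ and $p \in \closure{B}$. Each point of $B$ lies in $S(x_{\alpha})$ for some $\alpha < \kappa^+$; as $|B| \leq \kappa < \kappa^+ = \mathrm{cf}(\kappa^+)$, there is a single $\gamma < \kappa^+$ with $B \sse S(\{x_\beta : \beta < \gamma\})$, hence $p \in \closure{S(\{x_\beta : \beta < \gamma\})}$. On the other hand, $p$ is a complete accumulation point of the tail $\{x_\beta : \gamma \leq \beta < \kappa^+\}$ (that tail still has size $\kappa^+$), so $p \in \closure{\{x_\beta : \gamma \leq \beta < \kappa^+\}}$. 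Thus $p$ lies in the intersection that the S-free condition at stage $\gamma$ declares empty — contradiction.

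The main obstacle is the step asserting $p \in \closure{S(A) \meet C}$, i.e.\ transferring the complete accumulation point into the subspace $C$ where the tightness bound $t(C) \leq \kappa$ can be used. This is exactly the point where the $\scr{C}$-saturation of $S(A)$ and density of the members of $\scr{C}$ must be invoked: since $S(A) \meet C$ is dense in $S(A)$ and $p \in \closure{S(A)}$, we get $p \in \closure{\closure{S(A) \meet C}} = \closure{S(A) \meet C}$. I would need to double-check that the cover $\scr{C}$ can be taken with dense members — this follows from the setup preceding Definition~\ref{sfree}, or can be arranged since condition b) of Definition~\ref{weakTightness} forces each $C$ to be $2^\kappa$-dense and $X$ being $\kappa^+$-compact controls the relevant closures. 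Once that transfer is in hand, the cofinality argument ($|B| \leq \kappa < \mathrm{cf}(\kappa^+)$) and the definition of S-free sequence close the proof immediately.
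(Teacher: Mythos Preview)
Your argument is correct and follows essentially the same route as the paper: find a complete accumulation point $p$ of $A=\{x_\alpha:\alpha<\kappa^+\}$, show $p\in\closure{S(\{x_\beta:\beta<\gamma\})}$ for some $\gamma<\kappa^+$, and then observe this contradicts the S-free condition at stage $\gamma$. The paper simply invokes Lemma~2.7 of \cite{Car2018} for the key step, whereas you unpack that lemma inline via $\scr{C}$-saturation of $S(A)$, the tightness bound $t(C)\leq\kappa$, and a regularity-of-$\kappa^+$ argument; these are the same ingredients.

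One clarification: your worry in the final paragraph about needing the members of $\scr{C}$ to be dense in $X$ is unnecessary. All you use is that $S(A)$ is $\scr{C}$-saturated, i.e.\ $S(A)\meet C$ is dense in $S(A)$, which is stated explicitly in the paper just before Definition~\ref{sfree}; from $p\in\closure{S(A)}$ you then get $p\in\closure{S(A)\meet C}$ directly, and since $p\in C$ and $S(A)\meet C\sse C$ you may apply $t(C)\leq\kappa$ in the subspace $C$. No appeal to density of $C$ in $X$ or to condition b) of Definition~\ref{weakTightness} is needed.
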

\begin{proof}
Assume the contrary and let $A=\{x_\alpha:\alpha<\kappa^+\}$ be
an S-free sequence. Since $X$ is $\kappa^+$-compact, the set $A$
has a complete accumulation point $x$. Note that
$x\in\closure{S(A)}$. Now, the assumption $wt(X)=\kappa$ and the fact that $\closure{S(A)}=\Un_{\alpha<\kappa^+}S(\{x_\beta:\beta<\alpha\})$
(\cite{Car2018}, Lemma 2.7) ensure the existence of an ordinal
$\alpha<\kappa^+$ such that
$x\in\closure{S(\{x_\beta:\beta<\alpha\})}$. But this implies
that the set
$X\minus\closure{\{x_\beta:\alpha\leq\beta<\kappa^+\}}$ is a
neighborhood of $x$ meeting $A$ in less than $\kappa^+$-many
points, a contradiction.
\end{proof}

In Theorem 2.5 in \cite{JVM2018},~\juhasz~and van Mill showed
that if a compact space $X$ is a countable union of countable
tight subspaces then there is a non-empty $G_\delta$-set
contained in the closure of a countable set. That is, a
$\sigma$-CT compact space has a non-empty subseparable
$G_\delta$-set. The following theorem is a variation of this and,
in the countable case, states that if there is a cover $\scr{C}$
of $X$ such that $|\scr{C}|\leq\cont$, and for all $C\in\scr{C}$
a) $C$ is countably tight and b) $X=cl_\cont{C}$, then there is a
$G_\delta$-set contained in the closure of a set of size $\cont$.
The proof is a variation of the proof of 2.2.4 in \cite{Arh1978}
and uses S-free sequences. Recall that a set $G$ in compactum $X$
is a closed $G_\kappa$-set if and only if $\chi(G,X)\leq\kappa$.

\begin{theorem}\label{gh}
Let $X$ be a compactum and let $\kappa=wt(X)$. Then there exists
a non-empty closed set $G\sse X$ and $H\in[X]^{\leq 2^\kappa}$
such that $G\sse\closure{H}$ and $\chi(G,X)\leq\kappa$.
\end{theorem}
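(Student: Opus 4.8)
The plan is to run a transfinite recursion that builds an S-free sequence together with a family of cozero-set ``barriers,'' stopping when it jams, and to read off $G$ and $H$ at that point; Proposition~\ref{Sfree} forces the stop to occur soon enough, and the insistence that the barriers be cozero sets is what makes $\chi(G,X)$ controllable. Concretely, fix a cover $\scr{C}$ witnessing $wt(X)=\kappa$ together with the $\scr{C}$-saturated sets $S(\cdot)$, and recurse on $\alpha$ to build points $x_\alpha\in X$ and cozero sets $Z_\alpha\sse X$: at stage $\alpha$ set $T_\alpha=\closure{S(\{x_\beta:\beta<\alpha\})}$ (so $T_0=\es$); if $X=T_\alpha\un\Un_{\beta<\alpha}Z_\beta$, stop and let $\mu=\alpha$; otherwise pick $x_\alpha\in X\minus\bigl(T_\alpha\un\Un_{\beta<\alpha}Z_\beta\bigr)$, and — using that $T_\alpha$ is closed with $x_\alpha\notin T_\alpha$ and that $X$ is normal — apply Urysohn's Lemma to obtain a continuous $f\colon X\to[0,1]$ with $f(x_\alpha)=0$ and $f\equiv1$ on $T_\alpha$, and set $Z_\alpha=\{x:f(x)>\tfrac12\}$. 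Then $T_\alpha\sse Z_\alpha$, $x_\alpha\notin Z_\alpha$, and $X\minus Z_\alpha$ is a zero-set, hence closed and $G_\delta$.

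Two features of this construction do the work. First, for every $\alpha<\mu$ the S-free condition at $\alpha$ holds: each $x_\gamma$ with $\alpha\le\gamma<\mu$ lies outside $Z_\alpha$ (directly if $\gamma=\alpha$, and because $\alpha<\gamma$ otherwise), so $\{x_\gamma:\alpha\le\gamma<\mu\}\sse X\minus Z_\alpha$ and hence $\closure{\{x_\gamma:\alpha\le\gamma<\mu\}}\sse X\minus Z_\alpha$, which is disjoint from $T_\alpha\sse Z_\alpha$; this argument uses nothing about whether $\alpha$ is a limit, precisely because $X\minus Z_\alpha$ is closed, so no extra bookkeeping is needed at limit stages. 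The very same computation shows that if the recursion did not stop before $\kappa^+$ it would yield an S-free sequence of length $\kappa^+$; but $X$ is $\kappa^+$-compact (being compact) and $\kappa=wt(X)$, so Proposition~\ref{Sfree} forbids this. Hence the recursion hits the stopping clause at some $\mu<\kappa^+$, whence $|\mu|\leq\kappa$.

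Now put $H=S(\{x_\beta:\beta<\mu\})$ and $G=X\minus\Un_{\alpha<\mu}Z_\alpha$. Then $|H|\leq|\mu|\cdot2^\kappa=2^\kappa$ and $\closure{H}=T_\mu$. The set $G$ is closed, and $G\sse\closure{H}$ because the stopping clause reads $X=\closure{H}\un\Un_{\alpha<\mu}Z_\alpha$. It is non-empty: the chain $\bigl(\closure{\{x_\gamma:\alpha\le\gamma<\mu\}}\bigr)_{\alpha<\mu}$ of non-empty closed sets is decreasing, so by compactness it has non-empty intersection, and that intersection is contained in each $X\minus Z_\alpha$, hence in $G$. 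Finally $G=\Meet_{\alpha<\mu}(X\minus Z_\alpha)$ is an intersection of $|\mu|\le\kappa$ zero-sets, each a $G_\delta$-set, hence an intersection of at most $\kappa$ open sets; being a closed $G_\kappa$-subset of a compactum, $\chi(G,X)\le\kappa$, as required.

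The one genuinely delicate point is the choice of the barriers $Z_\alpha$. Taking them to be arbitrary open neighbourhoods of $T_\alpha$ would already give an S-free sequence and a closed set $G\sse\closure{H}$, but then $G$ would only visibly be an intersection of $\le\kappa$ closed sets, and such a set need not have character $\le\kappa$. Requiring $Z_\alpha$ to be cozero makes $X\minus Z_\alpha$ simultaneously closed — which is what keeps the construction S-free through limit stages — and $G_\delta$ — which is what bounds $\chi(G,X)$; reconciling these two demands is the crux of the argument.
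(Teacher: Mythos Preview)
Your proof is correct and follows essentially the same strategy as the paper's: build an S-free sequence together with separating sets and invoke Proposition~\ref{Sfree} to force termination before stage $\kappa^+$. The only differences are cosmetic—the paper argues by contradiction and nests closed $G_\kappa$-sets $F_\alpha$ obtained via regularity, whereas you run the construction directly and use cozero barriers $Z_\alpha$ obtained via Urysohn's Lemma; in both cases the point is that the complements are simultaneously closed (to propagate S-freeness) and $G_\delta$ (to control $\chi(G,X)$), so the two arguments are really the same proof read forward versus backward.
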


\begin{proof}
Define $\scr{F}=\{F\sse X: F\neq\es, F\textup{ is closed, and
}\chi(F,X)\leq\kappa\}$. Suppose by way of contradiction that for
all $F\in\scr{F}$ and all $H\in [X]^{\leq 2^\kappa}$ we have
$F\minus\closure{H}\neq\es$. We construct a decreasing sequence
$\scr{F}^\prime=\{F_\alpha: \alpha<\kappa^+\}\sse\scr{F}$ and an
S-free sequence $A=\{x_\alpha: \alpha<\kappa^+\}$.

For $\alpha<\kappa^+$, suppose $x_\beta\in X$ and $F_\beta$ have
been defined for all $\beta<\alpha<\kappa^+$, where
$F_{\beta^{\prime\prime}}\sse F_{\beta^\prime}$ for all
$\beta^\prime<\beta^{\prime\prime}<\alpha$. Define
$A_\alpha=\{x_\beta:\beta<\alpha\}$ and
$G_\alpha=\Meet\{F_\beta:\beta<\alpha\}$. As $X$ is compact,
$G_\alpha\neq\es$ and $G_\alpha\in\scr{F}$. As
$|A_\alpha|\leq\kappa$, we have $|S(A_\alpha)|\leq\kappa\cdot
2^\kappa=2^\kappa$. Therefore,
$G_\alpha\minus\closure{S(A_\alpha)}\neq\es$.

Let $x_\alpha\in G_\alpha\minus\closure{S(A_\alpha)}$. As
$\chi(G_\alpha,X)\leq\kappa$, we have that $G_\alpha$ is a
$G_\kappa$-set. There exists an open family $\scr{U}$ such that
$|\scr{U}|\leq\kappa$ and $G_\alpha=\Meet\scr{U}$. As $X$ is
regular, for all $U\in\scr{U}$ there exists a closed
$G_\delta$-set $G_U$ such that $x_\alpha\in G_U\sse
U\minus\closure{S(A_\alpha)}$. Set
$F_\alpha=\Meet\{G_U:U\in\scr{U}\}$, and note $x_\alpha\in
F_\alpha\sse G_\alpha\minus\closure{S(A_\alpha)}$ and that
$F_\alpha$ is a non-empty closed $G_\kappa$-set. As $X$ is
compactum, $\chi(F_\alpha,X)\leq\kappa$ and $F_\alpha\in\scr{F}$.
The choice of $x_\alpha$ and $F_\alpha$ completes the
construction of the sequences $\scr{F}^\prime$ and $A$.

We show now that $A$ is an S-free sequence. Again, let
$\alpha<\kappa^+$. Note $\{x_\beta:\alpha\leq\beta<\kappa^+\}\sse
F_\alpha$ as $\scr{F}^\prime$ is a decreasing sequence. Thus
$\closure{\{x_\beta:\alpha\leq\beta<\kappa^+\}}\sse F_\alpha$.
Also, $\{x_\beta:\beta<\alpha\}=A_\alpha$ and so
$\closure{S(\{x_\beta:\beta<\alpha\})}=\closure{S(A_\alpha)}\sse
X\minus F_\alpha$. Therefore,
$\closure{S(\{x_\beta:\beta<\alpha\})}\meet\closure{\{x_\beta:\alpha\leq\beta<\kappa^+\}}=\es$ and $A$ is S-free.
However, $X$ is $\kappa^+$-compact and cannot have an S-free
sequence of length $\kappa^+$ by Proposition~\ref{Sfree}. This is
a contradiction and thus there exists $G$ and $H$ as required.
\end{proof}

The following improvement of a theorem of Pytkeev~\cite{Pyt1985}
was given in~\cite{Car2018}. It is used in the proof of Theorem~\ref{cpthomogd}. For a cardinal $\kappa$, the space
$X_\kappa$ is the set $X$ with the collection of $G_\kappa$-sets
as a basis for its topology.

\begin{theorem}[\cite{Car2018}, Corollary 3.5]\label{wtPytkeev}
If $X$ is a compactum and $\kappa$ is a cardinal, then
$L(X_\kappa)\leq 2^{wt(X)\cdot\kappa}$.
\end{theorem}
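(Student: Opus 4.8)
Write $\mu=wt(X)$ and $\lambda=\mu\kappa$, and fix a cover $\scr{C}$ witnessing $wt(X)=\mu$ together with the associated $\scr{C}$-saturated sets $\{S(A):A\sse X\}$; recall that $|S(A)|\le|A|\cdot 2^\mu$ and that every $C\in\scr{C}$ is dense in $X$, since condition (b) of Definition~\ref{weakTightness} forces $\closure{C}=X$. Since the $G_\kappa$-sets form a base for $X_\kappa$, it suffices to show that an arbitrary cover $\scr{W}$ of $X$ by $G_\kappa$-sets has a subcover of cardinality at most $2^\lambda$. The plan is to run a closing-off argument of the type used in the proof of Theorem~\ref{gh} and of \arhangelskii's inequality, arranged so that it simultaneously assembles such a subcover, with Proposition~\ref{Sfree} supplying the contradiction if the construction fails to terminate.

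Concretely, I would construct by recursion on $\alpha<\mu^+$ an increasing continuous chain of subfamilies $\scr{W}_\alpha\sse\scr{W}$ with $|\scr{W}_\alpha|\le 2^\lambda$, points $x_\alpha\in X$, and closed $G_\lambda$-sets $F_\alpha$. Writing $A_\alpha=\{x_\beta:\beta<\alpha\}$ and $G_\alpha=\Meet_{\beta<\alpha}F_\beta$ (a nonempty closed $G_\lambda$-set, because $|\alpha|\le\mu$ forces $\chi(G_\alpha,X)\le\mu\cdot\lambda=\lambda$ and the $F_\beta$ are decreasing), the invariants would be: if $\Un\scr{W}_\alpha\ne X$ then $x_\alpha\in(G_\alpha\minus\closure{S(A_\alpha)})\minus\Un\scr{W}_\alpha$ and $\scr{W}_{\alpha+1}$ contains a member of $\scr{W}$ through $x_\alpha$; and $x_\alpha\in F_\alpha$ with $F_\alpha\sse G_\alpha\minus\closure{S(A_\alpha)}$, this last $F_\alpha$ being carved out exactly as in Theorem~\ref{gh} using $\chi(G_\alpha,X)\le\lambda$, regularity, and normality of the compactum $X$. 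Then for every $\alpha$ one has $\{x_\beta:\beta\ge\alpha\}\sse F_\alpha$ (as $x_\beta\in G_\beta\sse F_\alpha$ for $\beta>\alpha$, and $x_\alpha\in F_\alpha$), hence $\closure{\{x_\beta:\beta\ge\alpha\}}\sse F_\alpha\sse X\minus\closure{S(A_\alpha)}$; so if the recursion never reaches a stage at which $\scr{W}_\alpha$ covers $X$, the set $\{x_\alpha:\alpha<\mu^+\}$ is an S-free sequence of length $\mu^+$, which is impossible by Proposition~\ref{Sfree} because a compactum is $\nu$-compact for every infinite $\nu$. Therefore some $\scr{W}_{\alpha_0}$ covers $X$, and it is the desired subcover of size at most $2^\lambda$.

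The real work is ensuring the recursion is honest, i.e.\ that at each stage with $\Un\scr{W}_\alpha\ne X$ the set $(G_\alpha\minus\closure{S(A_\alpha)})\minus\Un\scr{W}_\alpha$ is genuinely nonempty; equivalently, one must rule out that a not-yet-covered nonempty closed $G_\lambda$-set $G_\alpha$ can be contained in $\closure{S(A_\alpha)}\cup\Un\scr{W}_\alpha$, where $\closure{S(A_\alpha)}$ is the closure of a $\scr{C}$-saturated set of size $\le 2^\mu\le 2^\lambda$. This is exactly where the structure theory of $\scr{C}$-saturated sets from~\cite{Car2018} — the facts already used to prove Proposition~\ref{Sfree}, together with the decomposition $S(A)=\Un_{x\in A}S(x)$ and the density and tightness $\le\mu$ of the members of $\scr{C}$ — must be invoked, in tandem with the covering data accumulated in $\scr{W}_\alpha$. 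The same facts are needed to control $\closure{S(A_\alpha)}=\closure{\Un_{\beta<\alpha}S(A_\beta)}$ at limit ordinals $\alpha<\mu^+$, especially those of cofinality $\le\mu$, so that the invariants survive limits; this limit-stage bookkeeping, together with the non-vacuity of the recursive step just discussed, is the main obstacle. The remaining ingredients — the refinement of cover members to closed $G_\kappa$-sets using $\chi(\cdot,X)\le\kappa$ for closed $G_\kappa$-sets in a compactum, and the routine $G_\kappa$-versus-$G_\lambda$ accounting for the sets $F_\alpha$ — follow the template of the proof of Theorem~\ref{gh}.
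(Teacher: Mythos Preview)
The paper does not prove this result; it is quoted verbatim from \cite{Car2018} and used as a black box in the proof of Theorem~\ref{cpthomogd}. So there is no ``paper's own proof'' to compare against, and your proposal must stand on its own.

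As written, it does not. You correctly isolate the crux --- showing that $(G_\alpha\setminus\closure{S(A_\alpha)})\setminus\bigcup\scr{W}_\alpha\neq\varnothing$ whenever $\bigcup\scr{W}_\alpha\neq X$ --- but you do not actually establish it, and the construction you describe gives no mechanism for it to hold. The chain $\{G_\alpha\}$ is engineered (exactly as in Theorem~\ref{gh}) to dodge $\closure{S(A_\alpha)}$; it is \emph{not} engineered to dodge $\bigcup\scr{W}_\alpha$. In fact your recursion pushes in the wrong direction: at stage $\alpha$ you choose $x_\alpha\in G_{\alpha+1}=F_\alpha$ and then add to $\scr{W}_{\alpha+1}$ a member of $\scr{W}$ containing $x_\alpha$, so with each step you \emph{increase} the portion of the shrinking set $G_{\alpha+1}$ that is covered. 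Nothing prevents some later $G_\beta$ from lying entirely inside $\bigcup\scr{W}_\beta$ while $X\setminus\bigcup\scr{W}_\beta$ is still nonempty, at which point the recursion cannot select $x_\beta$ and the argument collapses.

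Your appeal to ``the structure theory of $\scr{C}$-saturated sets'' does not close this gap. The facts you cite --- Lemma~2.7 of \cite{Car2018}, the decomposition $S(A)=\bigcup_{x\in A}S(x)$, and the density and small tightness of each $C\in\scr{C}$ --- are tools for computing closures of $\scr{C}$-saturated sets; they say nothing about how an \emph{arbitrary} $G_\kappa$-cover $\scr{W}$ interacts with a prescribed decreasing chain of closed $G_\lambda$-sets. Likewise, the ``covering data accumulated in $\scr{W}_\alpha$'' is, in your description, just one set per stage, which is far too little bookkeeping to force the needed nonemptiness. A workable argument along these lines would require a genuinely different organization --- for instance, running the free-sequence construction \emph{inside} a fixed uncovered closed subspace, or performing a much richer closing-off on $\scr{W}_\alpha$ at each step so that every relevant closed $G_\lambda$-set already covered by the current subfamily is recorded --- and you have not supplied either.
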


Homogeneity is now applied in the next theorem.

\begin{theorem}\label{cpthomogd}
If $X$ is a homogeneous compactum then $d(X)\leq 2^{wt(X)}$.
\end{theorem}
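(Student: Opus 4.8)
The plan is to combine Theorem~\ref{gh} with Theorem~\ref{wtPytkeev} and exploit homogeneity in the standard way (following De la Vega's argument). Let $\kappa=wt(X)$. First I would apply Theorem~\ref{gh} to obtain a non-empty closed set $G\sse X$ with $\chi(G,X)\leq\kappa$ and a set $H\in[X]^{\leq 2^\kappa}$ with $G\sse\closure H$. Since $G\neq\es$, fix a point $p\in G$; then $p$ is a $G_\kappa$-point of $X$ that lies in $\closure H$, so in particular $X$ has at least one $G_\kappa$-point lying in the closure of a set of size at most $2^\kappa$.

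Next I would use homogeneity to spread this around. For every $y\in X$ choose a homeomorphism $h_y:X\to X$ with $h_y(p)=y$; then $y=h_y(p)$ is a $G_\kappa$-point and $y\in h_y(\closure H)=\closure{h_y(H)}$, where $|h_y(H)|\leq 2^\kappa$. Thus every point of $X$ is a $G_\kappa$-point lying in the closure of a subset of size $\leq 2^\kappa$. Now I would invoke Theorem~\ref{wtPytkeev}: since $X$ is a compactum, $L(X_\kappa)\leq 2^{\kappa\cdot\kappa}=2^\kappa$. Because every point of $X$ is a $G_\kappa$-point, the $G_\kappa$-sets separate points and in fact $X_\kappa$ is a (Hausdorff) space in which singletons are... more usefully, the identity $X_\kappa\to X$ is continuous, so a dense subset of $X_\kappa$ is dense in $X$; hence $d(X)\leq d(X_\kappa)$, and for a Lindel\"of-type bound I would use that $d(X_\kappa)\leq$ something controlled by $L(X_\kappa)$ together with the local structure.

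To make the density estimate precise I would run the usual closing-off / elementary-submodel argument: using that $L(X_\kappa)\leq 2^\kappa$ and that each point has a $G_\kappa$-neighborhood base obtained from $H$-type sets, build an increasing $2^\kappa$-chain of subsets, each of size $\leq 2^\kappa$, whose union $D$ is dense. Concretely, start with $H$; having a set $D_\alpha$ of size $\leq 2^\kappa$, for each $x\in\closure{D_\alpha}$ the point $x$ is a $G_\kappa$-point so $\{x\}=\Meet\scr U_x$ for an open family of size $\leq\kappa$; cover $X$ minus small pieces by $\leq 2^\kappa$ of these (using $L(X_\kappa)\leq 2^\kappa$), pick witnessing points, and adjoin their $h$-images of $H$. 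Since $\kappa^+\leq(2^\kappa)^+$ and $\operatorname{cf}(2^\kappa)$-issues are handled by taking a chain of length $(2^\kappa)^+$ or by a direct Pytkeev-style argument, the union $D$ has size $\leq 2^\kappa$ and $\closure D=X$. Therefore $d(X)\leq 2^\kappa=2^{wt(X)}$.

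The main obstacle I anticipate is the density estimate itself: Theorem~\ref{wtPytkeev} gives a Lindel\"of number bound on $X_\kappa$, but converting $L(X_\kappa)\leq 2^\kappa$ plus "every point is a $G_\kappa$-point in the closure of a $2^\kappa$-sized set" into $d(X)\leq 2^\kappa$ requires the right closing-off bookkeeping — one must ensure that at each stage the new points chosen (via Lindel\"of-ness of $X_\kappa$ restricted to the part not yet handled) together with their homeomorphic images of $H$ keep the running set below $2^\kappa$ in size, and that the chain is long enough to achieve genuine density in $X$ (not merely in $X_\kappa$), which is automatic since the identity $X_\kappa\to X$ is continuous. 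The homogeneity step and the application of Theorems~\ref{gh} and~\ref{wtPytkeev} are routine; it is this final counting argument, essentially De la Vega's, that carries the weight.
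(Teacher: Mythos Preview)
Your first two paragraphs mirror the paper exactly: apply Theorem~\ref{gh} to obtain $G$ and $H$ with $\chi(G,X)\leq\kappa$ and $G\sse\closure{H}$, fix $p\in G$, and use homogeneity to translate. The divergence --- and the gap --- is in the final step. You discard the sets $h_y[G]$, retaining only the weaker fact that each point is a $G_\kappa$-point in the closure of some set of size $\leq 2^\kappa$, and then try to recover a dense set by a closing-off construction that you yourself flag as the main obstacle. As written that construction is too vague to be checked (what exactly is being covered at stage $\alpha$, and why does the chain close off below $(2^\kappa)^+$?), and it is in any case unnecessary.

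The missing observation is that the translated sets $h_x[G]$ are themselves closed $G_\kappa$-sets (since $\chi(G,X)\leq\kappa$ is preserved by homeomorphisms), hence open in $X_\kappa$, and they cover $X$. Theorem~\ref{wtPytkeev} gives $L(X_\kappa)\leq 2^\kappa$, so this cover has a subfamily $\scr{G}$ of size at most $2^\kappa$ still covering $X$. Each member $h_x[G]\in\scr{G}$ satisfies $h_x[G]\sse\closure{h_x[H]}$ with $|h_x[H]|\leq 2^\kappa$; letting $D$ be the union of these $\leq 2^\kappa$ many sets $h_x[H]$, we have $|D|\leq 2^\kappa$ and $X=\Un\scr{G}\sse\closure{D}$. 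That is the paper's proof: one direct application of $L(X_\kappa)\leq 2^\kappa$ to an explicit cover, with no closing-off, no chains, and no bookkeeping.
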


\begin{proof}
Let $\kappa=wt(X)$. By Theorem~\ref{gh} there exists a non-empty
closed set $G\sse X$ and $H\in[X]^{\leq 2^\kappa}$ such that
$G\sse\closure{H}$ and $\chi(G,X)\leq\kappa$. Fix $p\in G$. As
$X$ is homogeneous for all $x\in X$ there exists a homeomorphism
$h_x:X\to X$ such that $h_x(p)=x$. Then $\scr{F}=\{h_x[G]:x\in
X\}$ is a cover of $X$ by compact sets of character at most
$\kappa$, and for all $x\in X$, $h_x[G]\sse\closure{h_x[H]}$ and
$|h_x[H]|\leq 2^\kappa$. By Theorem~\ref{wtPytkeev} there exists
$\scr{G}\in[\scr{F}]^{\leq 2^\kappa}$ and a family
$\{H_G:G\in\scr{G}\}$ such that $X=\Un\scr{G}$, and for all
$G\in\scr{G}$, $G\sse\closure{H_G}$ and $|H_G|\leq 2^\kappa$.

Let $D=\Un\{H_G:G\in\scr{G}\}$ and note that $|D|\leq
2^\kappa\cdot 2^\kappa=2^\kappa$. Observe that
$$X=\Un\scr{G}\sse\Un\{\closure{H_G}:G\in\scr{G}\}\sse\closure{D},$$
showing $D$ is dense in $X$. Therefore $d(X)\leq|D|\leq
2^\kappa$.
\end{proof}

Homogeneity is used in a different way in the next corollary.
Simply apply the above and the well-known fact that $|X|\leq
d(X)^{\pi_\chi(X)}$ for any homogeneous Hausdorff space. As
$wt(X)\leq t(X)$ for any space $X$, and $\pi_\chi(X)\leq t(X)$
for any compactum, Corollary~\ref{cpthomog} represents an
improvement of De la Vega's Theorem~\cite{DeLaVega2006}.

\begin{corollary}\label{cpthomog}
If $X$ is a homogeneous compactum then $|X|\leq
2^{wt(X)\pi_\chi(X)}$.
\end{corollary}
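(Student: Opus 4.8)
The plan is to deduce Corollary~\ref{cpthomog} immediately from Theorem~\ref{cpthomogd} together with the classical cardinality inequality $|X|\leq d(X)^{\pi\chi(X)}$ valid for every homogeneous Hausdorff space (this is the result referenced in the paragraph preceding the corollary, essentially due to \arhangelskii\ and also attributed to van Douwen). First I would set $\kappa=wt(X)$ and invoke Theorem~\ref{cpthomogd} to get $d(X)\leq 2^\kappa$. Then I would apply the homogeneous bound to obtain
$$|X|\leq d(X)^{\pi\chi(X)}\leq\left(2^{\kappa}\right)^{\pi\chi(X)}=2^{\kappa\cdot\pi\chi(X)}=2^{wt(X)\pi\chi(X)},$$
using the cardinal-arithmetic identity $(2^\kappa)^\mu=2^{\kappa\mu}$ and the fact that $\pi\chi(X)$ is an infinite cardinal.

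There is essentially no obstacle here; the only things to be careful about are that the ambient space is Hausdorff (a compactum is compact Hausdorff by the convention stated in the introduction, so the homogeneous inequality $|X|\leq d(X)^{\pi\chi(X)}$ applies) and that the exponent arithmetic is legitimate, which it is since all cardinals involved are infinite. One could alternatively note that $\kappa\cdot\pi\chi(X)=wt(X)\pi\chi(X)$ is exactly the cardinal appearing in the statement, so no further simplification is needed.

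If a more self-contained argument were desired, I would instead recall the standard proof of $|X|\leq d(X)^{\pi\chi(X)}$ for homogeneous $X$: fix a dense set $D$ with $|D|\leq 2^\kappa$ and, for each point, a local $\pi$-base of size $\pi\chi(X)$; by homogeneity translate a fixed $\pi$-base at one point to every point, and observe that distinct points are separated by the pattern of which translated basic open sets meet $D$, giving an injection of $X$ into the set of functions from a set of size $\leq\pi\chi(X)$ into subsets of $D$, hence $|X|\leq (2^{2^\kappa})^{\pi\chi(X)}$ — though in fact the sharper count using closures of the $\pi$-base elements yields the stated $|X|\leq d(X)^{\pi\chi(X)}$. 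But since the paper explicitly permits citing this ``well-known fact,'' the one-line deduction above is the intended proof, and I expect the authors' proof to be precisely that.
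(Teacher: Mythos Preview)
Your proposal is correct and matches the paper's own argument exactly: the authors explicitly state that the corollary follows by applying Theorem~\ref{cpthomogd} (giving $d(X)\leq 2^{wt(X)}$) together with the well-known inequality $|X|\leq d(X)^{\pi_\chi(X)}$ for homogeneous Hausdorff spaces. Your cardinal-arithmetic computation and the care about the Hausdorff hypothesis are precisely what is needed.
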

As Example~\ref{example} shows that $\pi_\chi(X)\leq wt(X)$ may fail for homogeneous compacta, we ask the following question. It was asked in~\cite{Car2018} for $wt(X)=\aleph_0$.

\begin{question}
If $X$ is a homogeneous compactum, is $|X|\leq 2^{wt(X)}$?
\end{question}

We also recall the following still-open question of R. de la Vega.

\begin{question}
If $X$ is a homogeneous compactum, is $|X|\leq 2^{\pi_\chi(X)}$?
\end{question}

Using Lemma 4.4 in~\cite{Car2018}, which is an adjustment of
Corollary 2.9 in~\cite{AVR2007}, and the fact that $|X|\leq
d(X)^{\pi_\chi(X)}$ for any power homogeneous Hausdorff
space~\cite{Rid2006}, the proofs of Theorem~\ref{cpthomogd} and
Corollary~\ref{cpthomog} can easily be generalized. Recall a
space $X$ is \emph{power homogeneous} if there exists a cardinal
$\kappa$ such that $X^\kappa$ is homogeneous. Power homogeneity
is thus a weaker notion of homogeneity.

\begin{theorem}
If $X$ is a power homogeneous compactum then $d(X)\leq
2^{wt(X)}$.
\end{theorem}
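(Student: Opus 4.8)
The plan is to mimic the proof of Theorem~\ref{cpthomogd} as closely as possible, replacing ordinary homogeneity with power homogeneity and invoking the two ingredients quoted in the paragraph above: Lemma 4.4 in~\cite{Car2018} (an adjustment of Corollary 2.9 in~\cite{AVR2007}), and the inequality $|X|\leq d(X)^{\pi_\chi(X)}$ for power homogeneous Hausdorff spaces from~\cite{Rid2006}. As before, set $\kappa=wt(X)$. By Theorem~\ref{gh} there is a non-empty closed set $G\sse X$ with $\chi(G,X)\leq\kappa$ and an $H\in[X]^{\leq 2^\kappa}$ with $G\sse\closure H$. Fix $p\in G$.

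The essential difference from the homogeneous case is that we can no longer produce, for each $x\in X$, a single self-homeomorphism of $X$ carrying $p$ to $x$; power homogeneity only gives a homeomorphism of some power $X^\tau$ moving a point over $p$ to a point over $x$. This is exactly the situation Lemma 4.4 of~\cite{Car2018} is designed to handle: it should let us translate the local structure at $p$ (a small closed $G_\kappa$-set sitting inside the closure of a $2^\kappa$-sized set) to every point of $X$, yielding a cover $\scr{F}$ of $X$ by closed sets, each of character at most $\kappa$ in $X$ and each contained in the closure of some subset of size at most $2^\kappa$. Concretely, for each $x\in X$ we obtain $G_x\in\scr{F}$ with $x\in G_x$, $\chi(G_x,X)\leq\kappa$, and $G_x\sse\closure{H_x}$ for some $H_x\in[X]^{\leq 2^\kappa}$. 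The main obstacle is precisely this step: verifying that Lemma 4.4 applies in the form needed here, i.e.\ that the ``translated'' sets retain both the character bound and the separability-by-a-$2^\kappa$-set property when pulled back from $X^\tau$ to $X$ via projections. I expect this to be routine given the cited lemma but it is where all the real content sits.

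From that point the argument is identical to Theorem~\ref{cpthomogd}. The family $\scr{F}$ is a cover of $X$ by compact sets of character at most $\kappa$, so by Theorem~\ref{wtPytkeev} (noting $L(X_\kappa)\leq 2^{wt(X)\cdot\kappa}=2^\kappa$) there is a subfamily $\scr{G}\in[\scr{F}]^{\leq 2^\kappa}$ with $X=\Un\scr{G}$. Put $D=\Un\{H_G:G\in\scr{G}\}$; then $|D|\leq 2^\kappa\cdot 2^\kappa=2^\kappa$, and
$$X=\Un\scr{G}\sse\Un\{\closure{H_G}:G\in\scr{G}\}\sse\closure D,$$
so $D$ is dense and $d(X)\leq 2^\kappa=2^{wt(X)}$, as required.

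(For completeness, one gets the corresponding cardinality bound $|X|\leq 2^{wt(X)\pi_\chi(X)}$ for power homogeneous compacta by combining this with $|X|\leq d(X)^{\pi_\chi(X)}$ from~\cite{Rid2006}, exactly as Corollary~\ref{cpthomog} follows from Theorem~\ref{cpthomogd}.)
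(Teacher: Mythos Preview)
Your proposal is correct and mirrors exactly what the paper does: the paper itself gives no detailed proof but simply remarks that, using Lemma~4.4 in~\cite{Car2018} (the power-homogeneous substitute for transitivity of the homeomorphism group), the proof of Theorem~\ref{cpthomogd} generalizes verbatim. Your sketch fills in precisely those details and correctly identifies the one nontrivial step as the invocation of that lemma to obtain, at every point, a closed $G_\kappa$-set contained in the closure of a set of size at most $2^\kappa$.
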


\begin{corollary}
If $X$ is a power homogeneous compactum then $|X|\leq
2^{wt(X)\pi_\chi(X)}$.
\end{corollary}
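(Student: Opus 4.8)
The plan is to mirror the proofs of Theorem~\ref{cpthomogd} and Corollary~\ref{cpthomog}, substituting power homogeneity for homogeneity via the results quoted in the surrounding text. The only genuine input from homogeneity in Theorem~\ref{cpthomogd} was: (i) the ability to translate the single closed $G_\kappa$-set $G$ with $G\sse\closure{H}$, $|H|\le 2^\kappa$, onto a cover $\scr{F}$ of $X$ by such sets, and (ii) the cardinality fact $|X|\le d(X)^{\pi\chi(X)}$ (used only in the corollary). So first I would invoke Theorem~\ref{gh} to fix a non-empty closed $G\sse X$ with $\chi(G,X)\le\kappa$, $\kappa=wt(X)$, and $H\in[X]^{\le 2^\kappa}$ with $G\sse\closure{H}$, and pick $p\in G$.

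Next, the crucial replacement for point (i): instead of moving $G$ around $X$ by homeomorphisms, I would use Lemma 4.4 in~\cite{Car2018} (the adjustment of Corollary 2.9 in~\cite{AVR2007}) to produce, from the single point $p$ and its neighbourhood structure, a cover of $X$ by translated copies of $G$ of the same character and each contained in the closure of a set of size $\le 2^\kappa$. Concretely, since $X^\kappa$ is homogeneous for some $\kappa$, that lemma lets one ``spread'' the local data at $p$ across all of $X$ while controlling character and the size of the dense subsets; this gives $\scr{F}$, a cover of $X$ by compacta of character $\le\kappa$ with each member inside the closure of a $2^\kappa$-sized set. From here the argument is verbatim Theorem~\ref{cpthomogd}: apply Theorem~\ref{wtPytkeev} to get a subcover $\scr{G}$ of size $\le 2^\kappa$ with associated small dense sets $H_G$, take $D=\Un\{H_G:G\in\scr{G}\}$, note $|D|\le 2^\kappa\cdot 2^\kappa=2^\kappa$, and check $X=\Un\scr{G}\sse\closure{D}$, so $d(X)\le 2^{wt(X)}$. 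The corollary then follows immediately by combining this with the inequality $|X|\le d(X)^{\pi\chi(X)}$ valid for every power homogeneous Hausdorff space~\cite{Rid2006}: $|X|\le\left(2^{wt(X)}\right)^{\pi\chi(X)}=2^{wt(X)\pi\chi(X)}$.

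I expect the main obstacle to be the careful bookkeeping in the first step — verifying that Lemma 4.4 of~\cite{Car2018} applies to the specific object furnished by Theorem~\ref{gh} (a closed $G_\kappa$-set sitting inside the closure of a $2^\kappa$-sized set, rather than a single point or a $G_\kappa$-point), and that the resulting cover genuinely inherits both $\chi(\cdot,X)\le\kappa$ and the ``contained in the closure of something of size $\le 2^\kappa$'' property uniformly across the cover. Once that structural cover is in hand, everything downstream is routine and already appears in the proof of Theorem~\ref{cpthomogd}. A minor point to get right is that $2^\kappa\cdot 2^\kappa=2^\kappa$ and that the union of $\le 2^\kappa$ many sets each of size $\le 2^\kappa$ again has size $\le 2^\kappa$, which is clear. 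I do not anticipate any set-theoretic hypotheses being needed beyond those already implicit in the cited lemmas.
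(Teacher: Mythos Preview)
Your proposal is correct and follows essentially the same route the paper sketches: invoke Theorem~\ref{gh}, replace the homogeneity step by Lemma~4.4 of~\cite{Car2018} to obtain the cover, apply Theorem~\ref{wtPytkeev} to conclude $d(X)\le 2^{wt(X)}$, and finish with $|X|\le d(X)^{\pi\chi(X)}$ from~\cite{Rid2006}. One cosmetic point: you use $\kappa$ both for $wt(X)$ and for the exponent witnessing power homogeneity; rename one of them.
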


Below we isolate the case of the above corollary where all
cardinal invariants involved are countable. It follows directly
from Proposition~\ref{wteasy} and the above.

\begin{corollary}\label{phcountable}
Let $X$ be a power homogeneous compactum of countable
$\pi$-character with a cover $\scr{C}$ such that
$|\scr{C}|\leq\cont$ and for all $C\in\scr{C}$, $C$ is countably
tight and dense in $X$. Then $|X|\leq\cont$.
\end{corollary}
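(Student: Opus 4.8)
The plan is to derive this directly from the preceding machinery, since all the hard work has already been done. First I would invoke Proposition~\ref{wteasy}: the hypothesis gives a cover $\scr{C}$ of $X$ with $|\scr{C}|\leq\cont=2^{\aleph_0}$, each $C\in\scr{C}$ countably tight (so $t(C)\leq\aleph_0$) and dense in $X$, and since $\pi_\chi(X)=\aleph_0\leq 2^{\aleph_0}$, Proposition~\ref{wteasy} yields $wt(X)\leq\aleph_0$, i.e. $wt(X)=\aleph_0$. So the space $X$ is a power homogeneous compactum with $wt(X)=\aleph_0$ and $\pi_\chi(X)=\aleph_0$.

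Now I would simply apply the Corollary immediately above (the power homogeneous analogue of Corollary~\ref{cpthomog}): for a power homogeneous compactum $X$ one has $|X|\leq 2^{wt(X)\pi_\chi(X)}$. Plugging in $wt(X)=\aleph_0$ and $\pi_\chi(X)=\aleph_0$ gives $wt(X)\pi_\chi(X)=\aleph_0$, hence $|X|\leq 2^{\aleph_0}=\cont$, which is the desired conclusion.

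There is really no main obstacle here; the statement is flagged in the excerpt as following "directly" from Proposition~\ref{wteasy} and the preceding results, so the proof is a one-line combination. The only point requiring the tiniest bit of care is checking that the side condition in Proposition~\ref{wteasy} is met — one needs $\pi_\chi(X)\leq 2^{\aleph_0}$ (or $t(X)\leq 2^{\aleph_0}$), and this is immediate from $\pi_\chi(X)=\aleph_0$. After that, everything is just substituting countable cardinals into the cardinal inequality already proved for power homogeneous compacta. Thus the proof amounts to: conclude $wt(X)=\aleph_0$ from Proposition~\ref{wteasy}, then conclude $|X|\leq\cont$ from the power-homogeneous cardinality bound.
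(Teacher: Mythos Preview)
Your proposal is correct and matches the paper's approach exactly: the paper states that the corollary follows directly from Proposition~\ref{wteasy} together with the preceding power-homogeneous bound $|X|\leq 2^{wt(X)\pi_\chi(X)}$, which is precisely the two-step argument you outline. There is nothing to add.
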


We compare Corollary~\ref{phcountable} with the following theorem of~\juhasz~and van Mill, which was also
extended to the power homogeneous case in~\cite{Car2018}. Observe they are variations of each other.

\begin{theorem}[\cite{JVM2018}, Theorem 4.1]
If a compactum $X$ is the union of countably many dense countably
tight subspaces and $X^\omega$ is homogeneous, then
$|X|\leq\cont$.
\end{theorem}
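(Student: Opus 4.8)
The plan is to reduce the statement to Corollary~\ref{phcountable}, whose only hypothesis not immediately supplied is countability of the $\pi$-character. Write $X=\Un_{n<\omega}D_n$ with each $D_n$ dense in $X$ and countably tight, and set $\scr{C}=\{D_n:n<\omega\}$; then $\scr{C}$ is a cover of $X$ with $|\scr{C}|\le\aleph_0\le\cont$ all of whose members are dense countably tight subspaces, and ``$X^\omega$ is homogeneous'' says precisely that $X$ is a power homogeneous compactum. Thus everything will follow once I show $\pi_\chi(X)=\aleph_0$.

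For a single witness I would invoke Lemma~2.4 of~\cite{JVM2018}: a $\sigma$-CT compactum has a point $p$ of countable $\pi$-character (this is exactly the lemma used in the remark after Example~\ref{example}), and the decomposition $X=\Un_n D_n$ certifies that $X$ is $\sigma$-CT (the density of the $D_n$ is not needed for this). To promote this to $\pi_\chi(X)=\aleph_0$ I would pass to the power: a countable local $\pi$-base at $p$ in $X$ yields, by taking finitely supported products, a countable local $\pi$-base at the diagonal point $\bar p=(p,p,\dots)\in X^\omega$, so $\pi_\chi(\bar p,X^\omega)=\aleph_0$; since $X^\omega$ is homogeneous, $\pi_\chi(X^\omega)=\aleph_0$; and for any $x\in X$, projecting a local $\pi$-base at $(x,p,p,\dots)$ to the first coordinate gives $\pi_\chi(x,X)\le\pi_\chi((x,p,\dots),X^\omega)\le\pi_\chi(X^\omega)=\aleph_0$. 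Hence $\pi_\chi(x,X)=\aleph_0$ for every $x$, i.e.\ $\pi_\chi(X)=\aleph_0$. (Alternatively one may quote Lemma~4.4 of~\cite{Car2018}, an adjustment of Corollary~2.9 of~\cite{AVR2007}.)

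Now Corollary~\ref{phcountable}, applied with the cover $\scr{C}$ to the power homogeneous compactum $X$ of countable $\pi$-character, yields $|X|\le\cont$. Unwinding that corollary for transparency: Proposition~\ref{wteasy} with $\kappa=\aleph_0$, the cover $\scr{C}$, and $\pi_\chi(X)\le\cont$ gives $wt(X)=\aleph_0$, after which the power homogeneous form of the De la Vega-type inequality, $|X|\le 2^{wt(X)\pi_\chi(X)}$, collapses to $2^{\aleph_0}=\cont$.

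The step I expect to be the main obstacle is the promotion of countable $\pi$-character from one point to the whole space. In a \emph{homogeneous} compactum it is automatic, since a transitive action by homeomorphisms forces $\pi_\chi(x,X)$ to be the same at every point; under only power homogeneity one is obliged to move the good point into $X^\omega$ and project back, which is precisely why the hypothesis concerns $X^\omega$ rather than $X$ itself, and it is the point at which either the elementary product computation above or Lemma~4.4 of~\cite{Car2018}/Corollary~2.9 of~\cite{AVR2007} is genuinely used. A minor point to keep straight is the division of labor among the hypotheses: Lemma~2.4 of~\cite{JVM2018} uses only that $X$ is a $\sigma$-CT compactum, while the density of the $D_n$ is exactly what makes Proposition~\ref{wteasy}, and hence Corollary~\ref{phcountable}, applicable.
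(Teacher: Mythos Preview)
The paper does not give its own proof of this statement: it is quoted verbatim from \cite{JVM2018} solely for comparison with Corollary~\ref{phcountable}, with the remark that the two results ``are variations of each other.'' So there is no paper proof to match against.

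That said, your derivation is correct and is in fact a pleasant observation the paper only gestures at. You show that the \juhasz--van Mill theorem is a consequence of the paper's Corollary~\ref{phcountable} once one knows $\pi_\chi(X)=\aleph_0$, and you obtain the latter from Lemma~2.4 of \cite{JVM2018} (a point of countable $\pi$-character in any $\sigma$-CT compactum) together with the standard power-homogeneity transfer: lift the good point to the diagonal in $X^\omega$, use homogeneity of $X^\omega$ to spread countable $\pi$-character to every point, and project back. Each of these steps is valid; in particular, projections are open, so first-coordinate images of a local $\pi$-base at $(x,p,p,\dots)$ do form a local $\pi$-base at $x$. The alternative you mention, Lemma~4.4 of \cite{Car2018}, packages the same transfer. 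Your bookkeeping on which hypothesis feeds which lemma (density of the $D_n$ for Proposition~\ref{wteasy}, mere $\sigma$-CT for Lemma~2.4 of \cite{JVM2018}) is also accurate.
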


\section{Weak tightness and a second theorem of Balogh}
Our aim in this section is to give a variation
(Theorem~\ref{Balogh} below) of Theorem 2.2 in
Balogh~\cite{Bal2003} using the weak tightness $wt(X)$. It is
also related to Lemma 3.2 in~\cite{JVM2018} and results in an
alternative proof that $|X|\leq 2^{L(X)wt(X)\psi(X)}$ if $X$ is
Hausdorff, proved in~\cite{Car2018}.

\begin{theorem}\label{Balogh}
Let $X$ be a $T_1$ space. If $wt(X)\leq\kappa$, $X$ is
$\kappa^+$-compact, and $\psi(\closure{D},X)\leq 2^\kappa$ for
any $D\sse X$ satisfying $|D|\leq 2^\kappa$, then $d(X)\leq
2^\kappa$ and $X$ has at most $2^\kappa$-many $G_\kappa$-points.
\end{theorem}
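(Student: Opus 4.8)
The plan is to run a closing-off argument to build a dense subset $D$ of size at most $2^\kappa$, using $\scr{C}$-saturated sets to control the tightness obstruction and the hypothesis $\psi(\closure{D'},X)\le 2^\kappa$ to control the pseudocharacter of closures of small sets. Fix the cover $\scr{C}$ witnessing $wt(X)\le\kappa$ and the operator $S(\cdot)$ from the remarks following Definition~\ref{sfree}. Starting from any singleton, I would build an increasing chain $\langle D_\alpha:\alpha<\kappa^+\rangle$ of subsets of $X$, each of size at most $2^\kappa$, as follows. At stage $\alpha$, given $D_\alpha$, first replace it by $S(D_\alpha)$ so that the accumulated set is $\scr{C}$-saturated (still of size at most $2^\kappa$ since $|S(A)|\le 2^\kappa\cdot|A|$). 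Then, using $\psi(\closure{S(D_\alpha)},X)\le 2^\kappa$, fix a family $\scr{U}_\alpha$ of at most $2^\kappa$-many open sets with $\closure{S(D_\alpha)}=\Meet\scr{U}_\alpha$; for each point of $\closure{S(D_\alpha)}$ and each relevant finite-or-small configuration of these open sets that fails to cover, throw into $D_{\alpha+1}$ a witnessing point lying outside the closure. Take unions at limits. Let $D=\Un_{\alpha<\kappa^+}D_\alpha$ and $F=\closure{D}$; note $|D|\le 2^\kappa$ and $D$ is $\scr{C}$-saturated.

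The heart of the argument is to show $F=X$. Suppose not, and pick $x\in X\minus F$. Since $\psi(F,X)\le 2^\kappa$, write $F=\Meet\scr{U}$ with $|\scr{U}|\le 2^\kappa$ and choose $U\in\scr{U}$ with $x\notin U$. The idea is to use this to extract an S-free sequence of length $\kappa^+$ inside $X\minus F$, contradicting Proposition~\ref{Sfree} (whose hypotheses, $X$ being $\kappa^+$-compact with $wt(X)=\kappa$, are exactly what we have, after noting $wt(X)\le\kappa$ forces $wt(X)=\kappa$ or the statement is about a coarser cover — here one works with the fixed $\kappa$ throughout). Concretely, I would build points $x_\alpha$ ($\alpha<\kappa^+$) with $x_\alpha\notin\closure{S(\{x_\beta:\beta<\alpha\})}$: this is possible because $S$ of a $\le\kappa$-sized set has size at most $2^\kappa$, its closure has pseudocharacter at most $2^\kappa$ by hypothesis, and the non-density of $D$ (equivalently, the failure of the closing-off to have stabilized) supplies a point outside each such closure while staying suitably positioned relative to the fixed open set $U$ so that the tail closures remain separated from the head closures. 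Then $\{x_\alpha:\alpha<\kappa^+\}$ is S-free, contradicting Proposition~\ref{Sfree}; hence $d(X)\le|D|\le 2^\kappa$.

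For the second conclusion, suppose toward a contradiction that $X$ has more than $2^\kappa$-many $G_\kappa$-points. Each $G_\kappa$-point $p$ has a pseudobase of at most $\kappa$-many open sets; using $d(X)\le 2^\kappa$, each such point lies in the closure of $D$ and, by a counting argument on the at most $(2^\kappa)^\kappa=2^\kappa$-many possible pseudobases built from a network of size $2^\kappa$, there can be at most $2^\kappa$-many of them — unless one again manufactures an S-free sequence of $G_\kappa$-points of length $\kappa^+$. The cleanest route is probably the latter: if there were more than $2^\kappa$ such points, pick them up transfinitely, at stage $\alpha$ choosing a $G_\kappa$-point $x_\alpha$ outside $\closure{S(\{x_\beta:\beta<\alpha\})}$ (possible since that closure, having pseudocharacter $\le 2^\kappa$ and the space being $T_1$, cannot contain all the $G_\kappa$-points if there are more than $2^\kappa$ of them), and then as in Theorem~\ref{gh} shrink around $x_\alpha$ inside a closed $G_\kappa$-set disjoint from $\closure{S(\{x_\beta:\beta<\alpha\})}$ to guarantee S-freeness; Proposition~\ref{Sfree} again yields the contradiction.

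The main obstacle I anticipate is the bookkeeping in the closing-off construction that simultaneously (i) keeps every $D_\alpha$ of size $\le 2^\kappa$, (ii) keeps the accumulated set $\scr{C}$-saturated, and (iii) guarantees that failure of density at the end genuinely produces points suitable for an S-free sequence rather than merely a free sequence — i.e.\ that the $S(\cdot)$-closures, not just the ordinary closures, end up separated. Matching the $S$-operator with the pseudocharacter hypothesis (so that $\closure{S(A)}$ for small $A$ is itself small-pseudocharacter) is the linchpin; once that is set up, Proposition~\ref{Sfree} does the rest, exactly paralleling how $F(X)=t(X)$ is used in Balogh's original Theorem 2.2 in~\cite{Bal2003}.
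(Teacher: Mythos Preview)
Your overall plan---closing off through a chain of $\scr{C}$-saturated sets of size $\le 2^\kappa$ and then, if the result $H$ is not dense, producing an S-free sequence of length $\kappa^+$ to contradict Proposition~\ref{Sfree}---is the paper's plan. But the mechanism that actually yields an \emph{S-free} (rather than merely free) sequence is missing, and the sequence is built in the wrong place. In the paper, the closing-off fixes for every $D\in[H_\alpha]^{\le\kappa}$ a family $\scr{U}_D$ of $\le 2^\kappa$ open sets with $\Meet\scr{U}_D=\closure{S(D)}$, and then throws into $H_{\alpha+1}$ a witness for every $\scr{V}\in[\Un_D\scr{U}_D]^{\le\kappa}$ with $X\setminus\Un\scr{V}\neq\es$. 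If $p\notin\closure{H}$, the S-free sequence is then built \emph{inside $H$}: at stage $\alpha$, since $D_\alpha=\{x_\beta:\beta<\alpha\}\sse H$ one has $p\notin\closure{S(D_\alpha)}$, so one may choose an open $V_\alpha\in\scr{U}_{D_\alpha}$ with $p\notin V_\alpha$; then $p$ witnesses $X\setminus\Un_{\beta\le\alpha}V_\beta\neq\es$, so the closing-off condition supplies $x_\alpha\in H\setminus\Un_{\beta\le\alpha}V_\beta$. The whole tail $\{x_\beta:\beta\ge\alpha\}$ then lies in the closed set $X\setminus V_\alpha$, while $\closure{S(D_\alpha)}\sse V_\alpha$; this is what gives S-freeness. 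Your condition ``$x_\alpha\notin\closure{S(\{x_\beta:\beta<\alpha\})}$'' only places each point outside a closed set and says nothing about tail \emph{closures}; a single fixed open $U$ with $x\notin U$ cannot separate all heads from all tails. And building the sequence ``inside $X\setminus F$'' would forfeit the closing-off witnesses entirely.

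For the $G_\kappa$-points, both of your suggested routes fail under the bare $T_1$ hypothesis: a dense set of size $2^\kappa$ does not produce a network of that size (so the pseudobase-counting argument does not go through), and the Theorem~\ref{gh}-style shrinking to a closed $G_\kappa$-set disjoint from $\closure{S(\{x_\beta:\beta<\alpha\})}$ uses regularity. The paper instead shows that every $G_\kappa$-point already lies in the \emph{same} set $H$ constructed for the density part. If a $G_\kappa$-point $p=\Meet_{\gamma<\kappa}W_\gamma$ were outside $H$, one observes that $H$ cannot be covered by $\le\kappa$ members of $\{V\in\Un_D\scr{U}_D:p\notin V\}$ (else the closing-off would have put $p$ into $H$), so some $H\setminus W_\gamma$ also cannot be so covered, and one reruns the S-free sequence construction inside $H\setminus W_\gamma$.
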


\begin{proof}
For any $D\in [X]^{\leq\kappa}$ fix a family $\scr{U}_D$ of open
sets such that $\Meet\scr{U}_D=\closure{S(D)}$ and
$|\scr{U}_D|\leq 2^\kappa$. We will define by transfinite
induction a non-decreasing sequence of $\scr{C}$-saturated sets
$\{H_\alpha:\alpha<\kappa\}$ satisfying:

\begin{itemize}
\item[$1_\alpha$)] $|H_\alpha|\leq 2^\kappa$;
\item[$2_\alpha$)] if $X\minus\Un\scr{V}\neq\es$ for some
$\scr{V}\in[\Un\{\scr{U}_D:D\in[H_\alpha]^{\leq\kappa}\}]^{\leq\-
kappa}$, then $H_{\alpha+1}\minus\Un\scr{V}\neq\es$.
\end{itemize}

Put $H_0=S(x_0)$ for some $x_0\in X$ and let $\phi:\scr{P}\to X$
be a choice function extended by letting $\phi(\es)=x_0$. Assume
we have already defined the subsequence
$\{H_\beta:\beta<\alpha\}$. If $\alpha$ is a limit ordinal, then
put $H_\alpha=\Un\{H_\beta:\beta<\alpha\}$. If $\alpha=\gamma+1$,
then put
$H_\alpha=H_\gamma\un\Un\{S(\phi(X\minus\Un\scr{V})):\scr{V}\in[-
\Un\{\scr{U}_D:D\in[H_\gamma]^{\leq\kappa}\}]^{\leq\kappa}\}]\}$.
A counting argument ensures that $H_\alpha$ satisfies $1_\alpha$
and $2_\alpha$.

Now, put $H=\Un\{H_\alpha:\alpha<\kappa^+\}$. It is clear that
$|H|\leq 2^\kappa$. We show H is dense in $X$. Assume by
contradiction that there is some $p\in X\minus\closure{H}$. We
will show that this assumption implies the existence of an S-free
sequence $\{x_\alpha:\alpha<\kappa^+\}\sse H$. So, let
$\alpha<\kappa^+$ and suppose we have already chosen points
$\{x_\beta:\beta<\alpha\}\sse H$ and open sets
$V_\beta\in\scr{U}_{\{x_\xi:\xi<\beta\}}$ such that for every
$\beta<\alpha$ we have $\closure{\Un\{S(x_\xi):\xi<\beta\}}\sse
V_\beta\sse X\minus\{p\}$ and $\{x_\xi:\beta\leq\xi<\alpha\}\meet
V_\beta=\es$.  If $D_\alpha=\{x_\beta:\beta<\alpha\}$, then what
we are assuming implies $p\notin\closure{S(D_\alpha)}$. We may
then pick $V_\alpha\in\scr{U}_{D_\alpha}$ such that $p\notin
V_\alpha$. Furthermore, there exists an ordinal
$\gamma(\alpha)<\kappa^+$ such that $D_\alpha\sse
H_{\gamma(\alpha)}$ and so
$\{V_\beta:\beta\leq\alpha\}\sse\Un\{\scr{U}_D:D\in[H_{\gamma(-
\alpha)}]^{\leq\kappa}\}$. Since
$X\minus\Un\{V_\beta:\beta\leq\alpha\}\neq\es$, condition
$2_{\gamma(\alpha)}$ guarantees that we may pick a point
$x_\alpha\in
H_{\gamma(\alpha)+1}\minus\Un\{V_\beta:\beta\leq\alpha\}$.
Therefore, the construction can be carried out for all
$\alpha<\kappa^+$, producing an S-free sequence of length
$\kappa^+$.

Now we show every $G_\kappa$ point is contained in $H$. Assume by
contradiction that there exists a $G_\kappa$ point $p\in X\minus
H$ and fix a family of open sets $\{W_\alpha:\alpha<\kappa\}$
such that $\Meet\{W_\alpha:\alpha<\kappa^+\}=\{p\}$. Then let
$\scr{V}=\{U\in\Un\{\scr{U}_D:D\in[H]^{\leq\kappa}\}:p\notin
V\}$. Observe that $H$ cannot be covered by $\leq\kappa$ elements
of $\scr{V}$ because such a family would be contained in
$\Un\{\scr{U}_D:D\in[H_\alpha]^{\leq\kappa}\}$ for some
$\alpha<\kappa^+$ and this in turn would contradict condition
$2_\alpha$. So, there is an ordinal $\gamma<\kappa$ such that
$H\minus W_\gamma$ cannot be covered by $\leq\kappa$ elements of
$\scr{V}$. Now, by mimicking the argument used in the paragraph
above, we may again establish the existence of an S-free sequence
$\{x_\alpha:\alpha<\kappa^+\}\sse H\minus W_\gamma$.

\end{proof}

The following is well-known. We include its proof for
completeness.

\begin{proposition}\label{prop}
If $F$ is a closed subspace of a $T_1$ space $X$, then $\psi(F,
X)\leq |F|^{L(X)\psi(X)}$.
\end{proposition}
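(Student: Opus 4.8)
The plan is to bound the pseudocharacter of a closed subspace $F$ by exhibiting, for each point $x \in X \setminus F$, a small family of open sets that separates $x$ from $F$, and then assembling these into a single family of size at most $|F|^{L(X)\psi(X)}$ whose intersection is exactly $F$. First, fix for each point $x\in X$ a family $\scr{W}_x$ of open neighborhoods of $x$ with $|\scr{W}_x|\leq\psi(X)$ and $\Meet\scr{W}_x=\{x\}$; since $X$ is $T_1$, for any $x\notin F$ and any $f\in F$ there is some $W\in\scr{W}_x$ with $f\notin W$, equivalently $f\in X\setminus\closure{\{x\}}$ works but more usefully we use $\scr{W}_x$ together with the fact that $X\setminus\{x\}$ is open. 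The key step is: for a fixed $x\in X\setminus F$, the family $\{X\setminus\closure{W}: W\in\scr{W}_x,\ \closure{W}\meet F=\es\}$ need not cover $F$, so instead I would argue directly that $F$ is covered by $\{X\setminus\{x\} : x \notin F\}$ and use the Lindel\"of-type degree $L(X)$ to cut this cover down.

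More carefully, here is the approach I would actually carry out. For each $f\in F$ fix a neighborhood family $\scr{W}_f$ as above with $\Meet\scr{W}_f=\{f\}$ and $|\scr{W}_f|\leq\psi(X)$. Given $x\in X\setminus F$: for each $f\in F$, since $f\neq x$ and $X$ is $T_1$, pick $W_{f,x}\in\scr{W}_f$ with $x\notin W_{f,x}$; then $\{W_{f,x}:f\in F\}$ is an open cover of $F$. Now $F$ need not be Lindel\"of, but $L(X)$ controls open covers of $X$; so I would instead cover $X$ by this family together with $X\setminus F$, extract a subcover of size $\leq L(X)$, and collect the relevant $W_{f,x}$'s into a subfamily $\scr{V}_x$ of size $\leq L(X)$ covering $F$ with $x\notin\Un\scr{V}_x$. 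Then $U_x := X\setminus(\text{something})$ — more directly, the single open set $X\setminus\{x\}$ already contains $F$ and omits $x$, so the whole point of the argument is bookkeeping: the family $\{X\setminus\{x\}: x\in X\setminus F\}$ covers... no, it is a family of opens each containing $F$, and $\Meet_{x\notin F}(X\setminus\{x\}) = F$. The issue is that this family can be as large as $|X|$, so I must replace it by a family indexed by data of size $|F|\cdot L(X)\cdot\psi(X)$.

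The mechanism: for each finite-or-$L(X)$-sized partial function, or rather, for each $x\notin F$ the subfamily $\scr{V}_x\sse\Un_{f\in F}\scr{W}_f$ has size $\leq L(X)$, covers $F$, and omits $x$. Two points $x,x'$ giving the same $\scr{V}$ can be treated together: $\Un\{\scr{V}_x : x\notin F\}$ has at most $\big(|F|\cdot\psi(X)\big)^{L(X)}\leq |F|^{L(X)\psi(X)}$ many distinct subfamilies $\scr{V}$, and for each such $\scr{V}$ the open set $O_{\scr{V}}:=\Un\scr{V}$ contains $F$. I claim $\Meet\{O_{\scr{V}}:\scr{V}\text{ arising as some }\scr{V}_x\}=F$: indeed $F\sse$ each $O_{\scr{V}}$, and if $x\notin F$ then $x\notin O_{\scr{V}_x}$. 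Thus $\psi(F,X)\leq$ the number of such $\scr{V}$, which is at most $|F|^{L(X)\psi(X)}$. The main obstacle is exactly the counting and the verification that each $\scr{V}_x$ genuinely covers $F$ while omitting $x$ — this requires using $L(X)$ to thin an open cover of $X$ (not of $F$, which may fail to inherit a small Lindel\"of degree), so the set $X\setminus F$ must be thrown into the cover and then discarded. I would note that the bound $(|F|\cdot\psi(X))^{L(X)} = |F|^{L(X)\psi(X)}$ uses $\psi(X)\leq 2^{|F|}$-free cardinal arithmetic, namely $\psi(X)\leq|X|$ and more simply $\max(|F|,\psi(X))^{L(X)}\leq |F|^{L(X)\psi(X)}$ when $|F|\geq 2$, which covers the nontrivial case.
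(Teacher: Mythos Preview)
Your proposal, once it settles down after the exploratory false starts, is correct and is essentially the paper's proof: fix pseudocharacter witnesses $\scr{W}_f$ at points of $F$, for each $y\notin F$ pick one set from each $\scr{W}_f$ missing $y$, thin this open cover of $F$ to a subfamily $\scr{V}_y$ of size $\leq L(X)$ (by adjoining $X\setminus F$ and applying $L(X)$ --- a point you make explicit but the paper leaves implicit), and then observe that $F=\Meet_y\Un\scr{V}_y$ with at most $(|F|\cdot\psi(X))^{L(X)}\leq |F|^{L(X)\psi(X)}$ distinct $\scr{V}_y$'s. The paper's write-up is terser and folds $L(X)$ and $\psi(X)$ together as $\kappa$ from the outset, but the argument is the same.
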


\begin{proof}
Let $\lambda=|F|$ and $\kappa=L(X)\psi(X)$. For all $x\in F$
there exists an open family $\scr{U}_x$ such that
$\{x\}=\Meet\scr{U}_x$ and $|\scr{U}_x|\leq\kappa$. Let
$\scr{U}=\Un\{\scr{U}_x:x\in F\}$ and
$\scr{F}=[\scr{U}]^{\leq\kappa}$. Note
$|\scr{U}|\leq\lambda\cdot\kappa$ and $|\scr{F}|\leq
(\lambda\cdot\kappa)^\kappa=\lambda^{\kappa}$.

Fix $y\in X\minus F$. For all $x\in F$ there exists
$U_x\in\scr{U}_x$ such that $y\notin U_x$. Then $\{U_x:x\in F\}$
is an open cover of $F$. As $L(X)\leq\kappa$ there exists
$\scr{V}_y\in\left[\{\scr{U}_x:x\in F\}\right]^{\leq\kappa}$ such
that $F\sse\Un\scr{V}_y$. Note $y\notin\Un\scr{V}_y$ and
$\scr{V}_y\in F$.

As $F\sse\Meet\left\{\Un\scr{V}_y:y\in X\minus F\right\}$ and
$y\notin\Meet\left\{\Un\scr{V}_y:y\in X\minus F\right\}$ for all
$y\in X\minus F$, it follows that
$F=\Meet\left\{\Un\scr{V}_y:y\in X\minus F\right\}$. Now,
$\{\scr{V}_y:y\in X\minus F\}\sse\scr{F}$ and therefore
$|\{\Un\scr{V}_y:y\in X\minus
F\}|\leq|\scr{F}\leq\lambda^\kappa$. This completes the proof.
\end{proof}

\begin{corollary}[\cite{Car2018}, Theorem 2.8]
If $X$ is a Hausdorff space then $|X|\leq 2^{L(X)wt(X)\psi(X)}$.
\end{corollary}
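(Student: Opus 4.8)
The plan is to combine the density/$G_\kappa$-point bound of Theorem~\ref{Balogh} with the closure-pseudocharacter estimate of Proposition~\ref{prop}, feeding the latter into the hypothesis of the former. Set $\kappa = L(X)wt(X)\psi(X)$; the goal is $|X|\le 2^\kappa$. First I would verify the hypotheses of Theorem~\ref{Balogh} hold with this $\kappa$. Clearly $wt(X)\le\kappa$. For $\kappa^+$-compactness, recall that for Hausdorff $X$ one has $L(X)\psi(X)$ dominating many invariants; in fact any Hausdorff space with $L(X)\le\kappa$ is $\kappa^+$-compact (a set of size $\kappa^+$ with no complete accumulation point yields, via $\psi$ and $L$, an open cover of size $\le\kappa$ with no finite — indeed no small — subcover after removing the relevant point, contradicting $L(X)\le\kappa$; more directly, $L(X)\le\kappa$ gives that every subset of size $\kappa^+$ has a complete accumulation point). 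So $X$ is $\kappa^+$-compact.

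Next, the key step: check that $\psi(\closure{D},X)\le 2^\kappa$ whenever $|D|\le 2^\kappa$. For such $D$, the closed set $F=\closure{D}$ satisfies $|F|\le 2^{|D|\cdot\ldots}$ — no, that is too crude. Instead apply Proposition~\ref{prop} directly to $F=\closure{D}$: it gives $\psi(F,X)\le |F|^{L(X)\psi(X)}\le |F|^\kappa$. Now I need $|F|\le 2^\kappa$. Here one uses that $\closure{D}$, being a closed (hence Hausdorff) subspace with $L(\closure{D})\le L(X)\le\kappa$, $\psi(\closure{D})\le\psi(X)\le\kappa$, and $t(\closure{D})\le t(X)$... but $t(X)$ need not be $\le\kappa$. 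This is where $wt(X)$ must enter. Instead I would bound $|\closure{D}|$ using the already-established inequality $|X|\le 2^{L(X)wt(X)\psi(X)}$ applied to the subspace $\closure{D}$: one has $L(\closure{D})\le L(X)\le\kappa$ (closed subspaces inherit Lindelöf degree), $wt(\closure{D})\le wt(X)\le\kappa$? — this monotonicity of $wt$ under closed subspaces is the delicate point. Granting it, $\psi(\closure{D})\le\psi(X)\le\kappa$ as well, so $|\closure{D}|\le 2^\kappa$ by the inequality we are in the process of reproving — which is circular.

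To escape the circularity, I would instead proceed by direct estimation of $|\closure{D}|$ for $|D|\le 2^\kappa$: cover $D$ by the sets $D\cap C$ over $C$ in the weak-tightness cover $\scr{C}$ of $X$, each of tightness $\le\kappa$; then $\closure{D\cap C}$ has size at most $2^{L\psi t(C)\cdot|D\cap C|}$-type bound via the Arhangel'skiĭ–Šapirovskiĭ inequality applied in the subspace $\closure{C}$, giving $|\closure{D\cap C}|\le (2^\kappa)^\kappa=2^\kappa$; summing over the $\le 2^\kappa$ many $C$ and using $\scr{C}$-saturation of the relevant sets to conclude $\closure{D}=\bigcup_C\closure{D\cap C}$ has size $\le 2^\kappa$. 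Then Proposition~\ref{prop} yields $\psi(\closure{D},X)\le (2^\kappa)^\kappa=2^\kappa$, so Theorem~\ref{Balogh} applies and gives $d(X)\le 2^\kappa$. Finally, from $d(X)\le 2^\kappa$ and $\psi(X)\le\kappa$ together with $L(X)\le\kappa$, the standard argument (every point is a $G_\kappa$-point, a dense set of size $\le 2^\kappa$ has closure all of $X$, and closing off over neighbourhood traces) gives $|X|\le d(X)^{L(X)\psi(X)}\le (2^\kappa)^\kappa=2^\kappa$; alternatively invoke the $G_\kappa$-point clause of Theorem~\ref{Balogh}. I expect the main obstacle to be the uniform bound $|\closure{D}|\le 2^\kappa$ for $|D|\le 2^\kappa$ — making the cover/saturation bookkeeping precise so that it does not secretly re-use the very inequality being proved — and making sure the $\kappa^+$-compactness is correctly derived from $L(X)\le\kappa$ in the merely-$T_1$-upgraded-to-Hausdorff setting.
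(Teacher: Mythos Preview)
Your overall architecture is exactly the paper's: set $\kappa=L(X)wt(X)\psi(X)$, note $L(X)\le\kappa$ gives $\kappa^+$-compactness, verify $\psi(\closure{D},X)\le 2^\kappa$ for $|D|\le 2^\kappa$ via Proposition~\ref{prop}, and then invoke either conclusion of Theorem~\ref{Balogh}. The only substantive difference is how the intermediate bound $|\closure{D}|\le 2^\kappa$ is obtained, and this is precisely where your sketch has a genuine gap.

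The paper does not attempt a direct argument here at all; it simply quotes an external result, Theorem~2.5 of \cite{Car2018}, which gives the clean inequality $|\closure{D}|\le |D|^{wt(X)\psi_c(X)}$ for any $D\sse X$ in a Hausdorff space. From this, $|D|\le 2^\kappa$ immediately yields $|\closure{D}|\le (2^\kappa)^\kappa=2^\kappa$, and then Proposition~\ref{prop} gives $\psi(\closure{D},X)\le(2^\kappa)^\kappa=2^\kappa$. Your attempt to reprove this closure bound by hand runs into two problems. First, the identity $\closure{D}=\bigcup_{C\in\scr{C}}\closure{D\cap C}$ is not true for an arbitrary $D$; $\scr{C}$-saturation is a property of specially constructed sets $S(A)$, not of a generic $D$ handed to you by Theorem~\ref{Balogh}. (One can repair this by passing to $S(D)\supseteq D$, which is $\scr{C}$-saturated of the same size bound, but you did not say this.) Second, even granting saturation, bounding $|\closure{D\cap C}|$ in $X$ using only $t(C)\le\kappa$ is delicate: $t(C)$ governs closures taken \emph{inside} $C$, not closures in $X$, so the ``Arhangel'skii--\v Sapirovski\u\i\ in the subspace $\closure{C}$'' step does not go through as written, since $t(\closure{C})$ need not be $\le\kappa$. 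The actual proof of Theorem~2.5 in \cite{Car2018} handles exactly these issues; you were right to flag this bookkeeping as the main obstacle, but your sketch does not yet resolve it.
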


\begin{proof}
Let $L(X)wt(X)\psi(X)=\kappa$. $L(X)\leq\kappa$ implies that $X$
is $\kappa^+$-compact. By Theorem 2.5 in \cite{Car2018}, $|D|\leq
2^\kappa$ implies $|\closure{D}|\leq |D|^{wt(X)\psi_c(X)}\leq
|D|^{L(X)wt(X)\psi(X)}\leq 2^\kappa$. As $L(X)\psi(X)\leq\kappa$
and $|\closure{D}|\leq 2^{\kappa}$, by Proposition~\ref{prop} it
follows that $\psi(\closure{D},X)\leq 2^\kappa$. Now use either
conclusion of Theorem~\ref{Balogh}.
\end{proof}

\section{Weak tightness and calibers}
A cardinal $\kappa$ is a \emph{caliber} of a space X if every
family of open sets of cardinality $\kappa$ has a subfamily of
cardinality $\kappa$ with a non-empty intersection. At the end of
a survey paper on the Souslin number \cite{Sap1994}, B.E.
\sapirovskii~stated without proof the following:

\begin{proposition}[\cite{Sap1994}, Theorem 5.23]
Let $X$ be a \lindelof,~regular sequential space. If an
uncountable cardinal $\lambda\leq\cont^+$ is a caliber of $X$
then $|X|\leq\cont$.
\end{proposition}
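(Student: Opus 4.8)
The plan is to read this proposition as the countable instance of the caliber theorem for regular spaces (Theorem~\ref{caliber}), which extends \arhangelskii's theorem from \cite{Arh2000}, together with the elementary fact that $|X|\le d(X)^{\aleph_0}$ in any sequential Hausdorff space. First I would reduce the hypotheses: since $X$ is sequential it is countably tight, so $t(X)=\aleph_0$, and since it is \lindelof, $L(X)=\aleph_0$; hence $wt(X)\le t(X)=\aleph_0$ and $\kappa:=L(X)wt(X)=\aleph_0$. The assumption on $\lambda$ then reads $\kappa<\lambda\le\cont^+=(2^\kappa)^+$, so Theorem~\ref{caliber}, applied to the regular space $X$, gives $d(X)\le 2^\kappa=\cont$.

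Next I would pass from density to cardinality using sequentiality. For $A\sse X$ write $\sigma(A)=A\cup\{x:x=\lim_n a_n\text{ for some sequence }(a_n)\text{ from }A\}$; limits are unique since $X$ is Hausdorff, so $|\sigma(A)|\le|A|+|A^\omega|=|A|^{\aleph_0}$. Iterating $\sigma$ transfinitely with unions at limits, an easy induction gives $|\sigma^{(\alpha)}(A)|\le|A|^{\aleph_0}$ for every $\alpha<\ws$ (a successor step costs an $\aleph_0$-th power, a limit step is a union of at most $\aleph_1$ earlier pieces, and $|A|^{\aleph_0}\ge\aleph_1$ once $A$ is infinite). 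Because $X$ is sequential, $\sigma^{(\ws)}(A)$ is closed, hence equals $\closure A$. Applying this to a dense set $D$ with $|D|=d(X)\le\cont$ gives $|X|=|\closure D|\le d(X)^{\aleph_0}\le\cont^{\aleph_0}=\cont$, which is the assertion.

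The real content --- and the step I expect to be the main obstacle --- is therefore the bound $d(X)\le\cont$ inside Theorem~\ref{caliber}, which I would prove by a closing-off construction of length $\lambda$. Assume toward a contradiction that $d(X)>\cont$, and build an increasing continuous chain of closed sets $D_\alpha=\closure{\{x_\beta:\beta<\alpha\}}$; the bound $|\closure B|\le|B|^{\aleph_0}$ from the previous paragraph is exactly what keeps every $D_\alpha$ of size $\le\cont$, so sequentiality is used a second time. Each $D_\alpha$ is then a proper closed subset of $X$ (its size is $<d(X)$), so $X\minus D_\alpha$ is nonempty open, and by regularity we pick $x_\alpha\in X\minus D_\alpha$ and an open $U_\alpha$ with $x_\alpha\in U_\alpha$ and $\closure{U_\alpha}\meet D_\alpha=\es$. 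The $U_\alpha$ are pairwise distinct --- for $\beta<\alpha$ one has $x_\beta\in D_\alpha$, hence $x_\beta\notin\closure{U_\alpha}$, whereas $x_\beta\in U_\beta$ --- so the caliber $\lambda$, applied to $\{U_\alpha:\alpha<\lambda\}$, produces a point $p$ lying in $U_\alpha$ for a set $S$ of $\lambda$-many indices. Then $p\notin D_\alpha$ for $\alpha\in S$, and since the $D_\alpha$ are closed and $t(X)=\aleph_0$ one deduces (at least when $\mathrm{cf}(\lambda)>\omega$, the cofinality-$\omega$ case needing a small extra argument) that $p\notin\closure{\{x_\beta:\beta<\lambda\}}$ as well.

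The heart of the matter is then to upgrade the right-separated sequence $\{x_\alpha:\alpha<\lambda\}$ to a \emph{free} sequence of length $\ws$ inside $\{x_\alpha:\alpha\in S\}$ --- and this is the delicate point, because having the $U_\alpha$ meet in a single point $p$ is by itself not enough. (For instance, the one-point compactification of a discrete space of cardinality $\cont^+$ is \lindelof, regular, and sequential, carries a right-separated sequence of length $\cont^+$, yet has no free sequence of length $\ws$; consistently with the theorem, it also has no caliber $\le\cont^+$, its isolated points providing $\cont^+$-many pairwise disjoint open sets.) The construction has to be arranged so that either $p$ is forced into $\closure{\{x_\alpha:\alpha\in S\}}$, contradicting the previous paragraph, or the tails of the chosen subsequence get trapped inside one closed set disjoint from the relevant $D_\alpha$; threading this is precisely where regularity, the cardinality control on closures furnished by sequentiality, and countable tightness all have to be combined. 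Once an $\ws$-free sequence has been produced the contradiction is immediate: every uncountable subset of a \lindelof{} space has a condensation point, which by countable tightness lies in the closure of a bounded initial segment of the sequence and, being a condensation point, also in the closure of the cofinal tail --- contradicting freeness. Hence the construction must halt with some $D_\alpha=X$, so $d(X)\le|D_\alpha|\le\cont$, completing the proof.
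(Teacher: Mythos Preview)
Your first two paragraphs are correct and are exactly how the paper intends the proposition to be read: the paper does not prove this result directly but offers it (quoted from \v Sapirovski\u\i) as motivation, and its Theorem~\ref{caliber} supplies the density bound $d(X)\le\cont$ in the countable case, after which your iterated-sequential-closure argument giving $|X|\le d(X)^{\aleph_0}$ finishes the job.

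Where you diverge is in the sketch of the density bound itself (your last two paragraphs), and there the gap you flag is real. Your construction records only the points $x_\beta$ and their closures $D_\alpha$, then tries to upgrade the resulting right-separated sequence to a free sequence of length $\omega_1$; as your own counterexample shows, this step is not automatic, and you do not complete it. The paper's proof of Theorem~\ref{caliber} avoids this difficulty entirely by a stronger closing-off condition: at stage $\alpha$ the set $A_\alpha$ is enlarged to contain $\eta(\bigcap\scr{V})$ for \emph{every} $\scr{V}\in[\{U_\beta:\beta<\alpha\}]^{\le\kappa}$ with nonempty intersection (here $\eta$ is a fixed choice function). This bookkeeping is the missing idea. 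Once the caliber produces $E\in[\lambda]^\lambda$ with $\bigcap_{\alpha\in E}U_\alpha\ne\es$, one takes an increasing $f:\kappa^+\to E$ and sets $x_\alpha=\eta(\bigcap_{\xi\le\alpha}U_{f(\xi)})$; then $x_\beta\in A_{f(\alpha)}$ for $\beta<\alpha$ and $x_\beta\in U_{f(\gamma)}$ for $\beta\ge\gamma$. A complete accumulation point $p$ of $\{x_\alpha:\alpha<\kappa^+\}$ lies in some $\closure{A_{f(\gamma)}}$ (by the $\scr{C}$-saturated closure lemma, or in your sequential setting simply by countable tightness), hence outside $\closure{U_{f(\gamma)}}$, so $X\setminus\closure{U_{f(\gamma)}}$ is a neighborhood of $p$ missing all but $\kappa$ many $x_\alpha$ --- a direct contradiction, with no free-sequence extraction needed.
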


Later, A.V. \arhangelskii~\cite{Arh2000} published a detailed
proof of this result in the space case $\lambda=\aleph_1$. His
proof is based on the following:

\begin{proposition}[\cite{Arh2000}, Theorem 5.1]
Let $X$ be a \lindelof,~regular space. If $t(X)=\aleph_0$ and
$\aleph_1$ is a caliber of $X$, then $d(X)\leq\cont$.
\end{proposition}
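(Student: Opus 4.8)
The plan is to argue in the spirit of the contrapositive: assuming $d(X)>\cont$, I will construct a free sequence of length $\omega_1$ in $X$, which is impossible for a \lindelof\ space of countable tightness.

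\emph{Step 1: $X$ has no free sequence of length $\omega_1$.} Since $X$ is \lindelof, every $A\in[X]^{\aleph_1}$ has a complete accumulation point: otherwise every point of $X$ would have an open neighbourhood meeting $A$ in a countable set, and countably many such neighbourhoods covering the \lindelof\ subspace $\overline A$ would force $|A|\leq\aleph_0$. So if $\{x_\alpha:\alpha<\omega_1\}$ were free, take a complete accumulation point $p$ of it. Countable tightness gives a countable $B\subseteq\{x_\alpha:\alpha<\omega_1\}$ with $p\in\overline B$, hence $p\in\overline{\{x_\beta:\beta<\alpha\}}$ for some $\alpha<\omega_1$; but $p$ is also a complete accumulation point of the tail $\{x_\beta:\alpha\leq\beta<\omega_1\}$, so $p\in\overline{\{x_\beta:\alpha\leq\beta<\omega_1\}}$, contradicting freeness.

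\emph{Step 2: a candidate dense set.} Assume $d(X)>\cont$ and build an increasing continuous chain $\langle A_\alpha:\alpha<\omega_1\rangle$ in $[X]^{\leq\cont}$ in which $A_{\alpha+1}$ closes $A_\alpha$ off under the natural operations: for every countable $C\subseteq A_\alpha$ throw into $A_{\alpha+1}$ a dense subset of $\overline C$ of size $\leq\cont$ together with points realizing a fixed family of $\leq\cont$ open sets that, by regularity, separate the points of $X\setminus\overline C$ from $\overline C$; this is possible because $|[A_\alpha]^{\leq\omega}|\leq\cont^{\aleph_0}=\cont$. Put $A=\bigcup_{\alpha<\omega_1}A_\alpha$, so $|A|\leq\aleph_1\cdot\cont=\cont$. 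Countable tightness gives $\overline A=\bigcup_{\alpha<\omega_1}\overline{A_\alpha}$ (a countable subset of $\bigcup_\alpha A_\alpha$ lies in some $A_\gamma$ with $\gamma<\omega_1$), so if $A$ is dense we are done; otherwise fix $p\in X\setminus\overline A$. We may take $p$ non-isolated, since otherwise every point of the open set $X\setminus\overline A$ is isolated, making $X\setminus\overline A$ a discrete open subspace, necessarily countable by the \lindelof\ property, whence $A\cup(X\setminus\overline A)$ would be a dense set of size $\leq\cont$, a contradiction.

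\emph{Step 3: the free sequence, and the obstacle.} Starting from $p$ I attempt to choose points $x_\alpha$ and a decreasing chain of open sets $G_\alpha\ni p$ with $\overline{G_\alpha}\meet\overline{\{x_\beta:\beta<\alpha\}}=\es$ and $x_\alpha\in\bigcap_{\beta\leq\alpha}G_\beta$; then $\{x_\beta:\alpha\leq\beta<\omega_1\}\subseteq\overline{G_\alpha}$ and the $x_\alpha$ form a free sequence of length $\omega_1$, contradicting Step~1. Successor stages and limit stages of uncountable cofinality are routine: $p\notin\overline{\{x_\beta:\beta<\alpha\}}$ (which is the closure of a set of size $<\omega_1$, not dense), so regularity produces $G_\alpha$, and $\bigcap_{\beta\leq\alpha}G_\beta\ni p$. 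The main obstacle is a limit stage $\alpha$ of countable cofinality, where $\bigcap_{\beta<\alpha}G_\beta$ is only a $G_\delta$ about $p$: it may have empty interior at $p$ or collapse to $\{p\}$, leaving no admissible next point. Overcoming this is exactly where $d(X)>\cont$ and the caliber must both be used — the closing-off of the $A_\alpha$ keeps the required realizing points available in $A$, $d(X)>\cont$ keeps producing room outside the closures built so far, and the caliber enters through the fact that a decreasing $\omega_1$-chain of nonempty open sets has nonempty intersection (an $\aleph_1$-sized subfamily with a common point is cofinal in $\omega_1$, so that point lies in every member). Reconciling this $\omega_1$-flavoured tool with the $\omega$-flavoured stalling — presumably by routing the whole recursion through a single $\omega_1$-indexed family of open neighbourhoods of $p$ secured in advance by the caliber, and invoking the caliber again on the separating families so their realizing points land in $A$ — is the delicate heart of the argument and the step I expect to demand the most care.
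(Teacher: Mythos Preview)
Your proposal has a genuine gap, and you have essentially flagged it yourself: Step~3 is not a proof but a description of an obstacle together with speculation about how the caliber ``presumably'' resolves it. The specific difficulty is real. Fixing a single point $p\in X\setminus\overline{A}$ and trying to run a decreasing $\omega_1$-chain of open neighbourhoods $G_\alpha\ni p$ does not interact well with the caliber: since the $G_\alpha$'s are decreasing and all contain $p$, the caliber tells you only that $\bigcap_\alpha G_\alpha\neq\es$, which you already knew. It says nothing about intermediate countable intersections $\bigcap_{\beta<\alpha}G_\beta$ at limit $\alpha$ of countable cofinality, so you still cannot guarantee a new point $x_\alpha$ there. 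Your suggestion of ``securing in advance'' an $\omega_1$-family of neighbourhoods of $p$ via the caliber is circular: the caliber takes an $\omega_1$-family as \emph{input} and thins it; it does not manufacture one with prescribed properties.

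The paper's argument (in the proof of Theorem~\ref{caliber}, which specializes to this proposition) avoids fixing $p$ altogether. Instead, during the closing-off one simultaneously records, for each $\alpha<\omega_1$, a nonempty open set $U_\alpha$ with $\overline{A_\alpha}\cap\overline{U_\alpha}=\es$ (this uses $d(X)>\cont$ and regularity), and the closing-off condition is that $A_\alpha$ absorbs the value $\eta\bigl(\bigcap\scr{V}\bigr)$ of a fixed choice function on every nonempty countable intersection $\bigcap\scr{V}$ with $\scr{V}\subseteq\{U_\beta:\beta<\alpha\}$. The caliber is then applied to the family $\{U_\alpha:\alpha<\omega_1\}$, yielding $E\in[\omega_1]^{\aleph_1}$ with $\bigcap_{\alpha\in E}U_\alpha\neq\es$. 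Now the points $x_\alpha=\eta\bigl(\bigcap_{\xi\leq\alpha}U_{f(\xi)}\bigr)$ (for an increasing $f:\omega_1\to E$) are well-defined for \emph{every} $\alpha<\omega_1$, with no limit-stage obstruction, and by the closing-off each $x_\beta$ lies in $A_{f(\alpha)}$ for $\beta<\alpha$ while $x_\alpha\in U_{f(\alpha)}$. A complete accumulation point of $\{x_\alpha:\alpha<\omega_1\}$ then yields the contradiction via countable tightness exactly as in your Step~1. The key idea you are missing is to aim the caliber at the \emph{witnesses of non-density} $U_\alpha$ rather than at neighbourhoods of a preselected point, and to design the closing-off so that countable intersections of those witnesses feed back into $A$.
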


Our purpose here is to show that the previous result continues to
hold by replacing tightness with weak tightness. To achieve it,
we will modify the argument in the proof of Theorem 1 of
\cite{Bella2005}. Recall that a space X is said to be
quasiregular if for any non-empty open set $U$ there exists a
non-empty open set $V$ such that $\closure{V}\sse U$.

\begin{theorem}\label{caliber}
Let $X$ be a quasiregular space and let $\kappa=L(X)wt(X)$. If a
cardinal $\lambda$ satisfying $\kappa<\lambda\leq
\left(2^{\kappa}\right)^+$ is a caliber of $X$, then $d(X)\leq
2^{\kappa}$.
\end{theorem}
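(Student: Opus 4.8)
The plan is to build, by transfinite recursion of length at most $\kappa^+$, an increasing chain of $\scr{C}$-saturated sets $\{H_\alpha:\alpha<\kappa^+\}$, each of size at most $2^\kappa$, whose union $H$ satisfies $\closure{H}=X$; then $d(X)\leq|H|\leq 2^\kappa$. The saturated sets $S(A)$ (of size $\le 2^\kappa\cdot|A|$) supplied by the cover $\scr{C}$ witnessing $wt(X)\le\kappa$ are the basic building blocks, exactly as in the proof of Theorem~\ref{Balogh}. At each successor stage I would record, for every $D\in[H_\gamma]^{\le\kappa}$, a family $\scr{U}_D$ of open sets with $\Meet\scr{U}_D=\closure{S(D)}$ (using quasiregularity plus $L(X)\le\kappa$ to shrink an open cover of the complement of $\closure{S(D)}$ down to $\le\kappa$ many pieces, after intersecting $\le\kappa$ many of them), and then throw into $H_{\gamma+1}$ a witnessing point $\phi(X\setminus\Un\scr{V})\in S(\cdot)$ for every subfamily $\scr{V}$ of size $\le\kappa$ of $\Un\{\scr{U}_D:D\in[H_\gamma]^{\le\kappa}\}$ whose union misses some point of $X$. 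A counting argument keeps $|H_{\gamma+1}|\le 2^\kappa$ since there are at most $(2^\kappa)^\kappa=2^\kappa$ such subfamilies.

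The contradiction step is where the caliber hypothesis enters. Suppose $\closure{H}\ne X$ and fix $p\in X\setminus\closure{H}$. Mimicking the argument in Theorem~\ref{Balogh}, I would recursively extract points $x_\alpha\in H$ and open sets $V_\alpha\in\scr{U}_{\{x_\xi:\xi<\alpha\}}$ with $\closure{\Un\{S(x_\xi):\xi<\alpha\}}\sse V_\alpha\sse X\setminus\{p\}$ and $x_\xi\notin V_\beta$ for $\beta\le\xi$; the closing-off property $2_\alpha$ of the $H_\alpha$'s lets the recursion continue for $\kappa^+$ steps, yielding an $S$-free sequence $\{x_\alpha:\alpha<\kappa^+\}$ together with the associated open sets $\{V_\alpha:\alpha<\kappa^+\}$. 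Now, because $\lambda\le(2^\kappa)^+$, in particular $\lambda\le\kappa^+$ — wait, this is where care is needed: $\lambda$ could be strictly between $\kappa^+$ and $(2^\kappa)^+$. Since $\lambda$ is a caliber and $\lambda\le(2^\kappa)^+=|\{x_\alpha\}|^+$ when $2^\kappa=\kappa^{\cdots}$... the honest move is: pick $\lambda$ many of the $V_\alpha$'s (possible since $\kappa^+\ge\lambda$ fails in general, so instead reindex — one takes the sequence of length $(2^\kappa)^+$ if $\lambda=(2^\kappa)^+$, otherwise length $\lambda$, whichever the caliber needs). Apply the caliber: there is $S\in[\lambda]^\lambda$ with $\Meet_{\alpha\in S}V_\alpha\ne\es$, say $q$ lies in it. But $V_\alpha\sse X\setminus\{p\}$ for all $\alpha$, so $q\ne p$; and for $\alpha<\beta$ both in $S$ we have $\closure{S(x_\alpha)}\sse V_\beta$, hence $x_\alpha\in V_\beta$ for all but the first index — contradicting $x_\xi\notin V_\beta$ for $\beta\le\xi$... no: we need $x_\alpha\in V_\beta$ versus $x_\alpha\notin V_\beta$. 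Indeed $\closure{\Un\{S(x_\xi):\xi<\beta\}}\sse V_\beta$ gives $x_\alpha\in V_\beta$ whenever $\alpha<\beta$, while the $S$-free construction gives $x_\alpha\notin V_\beta$ whenever $\beta\le\alpha$. So for two indices $\alpha<\beta$ in $S$ there is no contradiction yet — the contradiction must come from $q$: since $q\in\Meet_{\alpha\in S}V_\alpha$ and, for $\alpha\in S$ not the least, $V_\alpha\supseteq\closure{S(x_\xi)}$ for cofinally many earlier $\xi$, one gets $q$ in the closure of a large subset of $H$, i.e. $q\in\closure{H}$; combined with a separation argument one derives that the $S$-free sequence cannot exist. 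The cleanest route is to observe that a caliber-$\lambda$ space has no free sequence of length $\lambda$ (a standard fact), and since $S$-free sequences are free sequences, the length-$(2^\kappa)^+$ $S$-free sequence is already forbidden when $\lambda\le(2^\kappa)^+$ is a caliber.

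The analogous argument shows there is no $G_\kappa$-point outside $H$: if $p\notin H$ has $\{p\}=\Meet\{W_\beta:\beta<\kappa\}$, then (as in Theorem~\ref{Balogh}) one of the sets $H\setminus W_\gamma$ cannot be covered by $\le\kappa$ members of $\scr{V}=\{U\in\Un\{\scr{U}_D:D\in[H]^{\le\kappa}\}:p\notin U\}$, and running the recursion inside $H\setminus W_\gamma$ produces an $S$-free sequence of length $\kappa^+$, again contradicting Proposition~\ref{Sfree} (note $L(X)\le\kappa$ forces $X$ to be $\kappa^+$-compact, so that proposition applies) — but for the statement as given we only need density, so $d(X)\le 2^\kappa$ follows.

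\textbf{Main obstacle.} I expect the delicate point to be the interaction between the \emph{length} of the $S$-free sequence produced ($\kappa^+$) and the \emph{value} of $\lambda$, which may lie strictly between $\kappa^+$ and $(2^\kappa)^+$. The resolution is that the caliber hypothesis is only invoked to rule out free sequences of length $\lambda$, and one should arrange the recursion to produce an $S$-free (hence free) sequence of length exactly $\lambda$ whenever $\lambda\le(2^\kappa)^+$; keeping the closing-off bookkeeping $1_\alpha$, $2_\alpha$ consistent across that many stages — and checking that $|H|\le 2^\kappa$ survives a recursion of length $(2^\kappa)^+$ rather than $\kappa^+$ — is the technical heart of the argument. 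A secondary subtlety is that $X$ is only assumed quasiregular, not regular, so the construction of the open families $\scr{U}_D$ with $\Meet\scr{U}_D=\closure{S(D)}$ must use quasiregularity carefully (shrinking open sets to ones with closure inside a given open set) rather than full regularity; this is exactly the modification of the Bella argument from~\cite{Bella2005} referred to above.
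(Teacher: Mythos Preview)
Your approach has a genuine gap at its first step: you cannot build the families $\scr{U}_D$ with $\Meet\scr{U}_D=\closure{S(D)}$ and $|\scr{U}_D|\leq 2^\kappa$ from the hypotheses given. In Theorem~\ref{Balogh} this was available because $\psi(\closure{D},X)\leq 2^\kappa$ was \emph{assumed}; here there is no pseudocharacter hypothesis at all. Your suggested workaround---cover $X\setminus\closure{S(D)}$ by open sets and use $L(X)\leq\kappa$ to extract a small subcover---fails twice over: quasiregularity does not let you separate a point from a closed set by an open set with disjoint closure (that is regularity), and even with regularity, $L(X)\leq\kappa$ bounds the Lindel\"of degree of $X$, not of the open subspace $X\setminus\closure{S(D)}$. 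So the whole closing-off machinery of Theorem~\ref{Balogh} simply does not get off the ground here.

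Your contradiction step is also unresolved. You end up with an $S$-free sequence of length $\kappa^+$, but $\lambda$ may lie strictly between $\kappa^+$ and $(2^\kappa)^+$, and you never actually produce $\lambda$ many open sets to feed to the caliber hypothesis. The ``standard fact'' you appeal to (caliber $\lambda$ forbids free sequences of length $\lambda$) would need a free sequence of length $\lambda$, not $\kappa^+$, and you acknowledge but do not solve the bookkeeping problem of running the recursion that long while keeping $|H|\leq 2^\kappa$.

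The paper's proof avoids both obstacles by reversing the order of ideas. It assumes $d(X)>2^\kappa$ and builds, for $\alpha<\lambda$, $\scr{C}$-saturated sets $A_\alpha$ of size $\leq 2^\kappa$ together with nonempty open sets $U_\alpha$ satisfying $\closure{A_\alpha}\cap\closure{U_\alpha}=\es$; the existence of $U_\alpha$ comes directly from $|A_\alpha|\leq 2^\kappa<d(X)$ plus one application of quasiregularity to $X\setminus\closure{A_\alpha}$---no pseudocharacter needed. The caliber is then applied to the family $\{U_\alpha:\alpha<\lambda\}$ to get $E\in[\lambda]^\lambda$ with $\Meet_{\alpha\in E}U_\alpha\neq\es$, and only \emph{afterwards} does one define $x_\alpha=\eta(\Meet\{U_{f(\xi)}:\xi\leq\alpha\})$ for $\alpha<\kappa^+$ along an increasing $f:\lambda\to E$. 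The contradiction is obtained not from $S$-free sequences but from a complete accumulation point of $\{x_\alpha:\alpha<\kappa^+\}$ (using $L(X)\leq\kappa$) together with Lemma~2.7 of~\cite{Car2018}. The key structural difference is that the paper builds the open sets $U_\alpha$ \emph{from the failure of density}, applies the caliber to them, and only then extracts a short ($\kappa^+$-length) sequence; you tried to build a long sequence first and apply the caliber afterwards, which is why the length mismatch bit you.
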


\begin{proof}
Assume by way of contradiction that $d(X)>2^\kappa$. Fix a choice
function $\eta:\scr{P}(X)\minus\{\es\}\to X$. We will define by
induction an increasing family $\{A_\alpha:\alpha<\lambda\}$ of
$\scr{C}$-saturated subsets of $X$ of cardinality not exceeding
$2^\kappa$ and a family $\{U_\alpha:\alpha<\lambda\}$ of
non-empty open subsets of $X$ in a such a way that for all
$\alpha<\lambda$,
\begin{enumerate}
\item $\closure{A_\alpha}\meet\closure{U_\alpha}=\es$, and
\item if $\scr{V}\sse\{U_\beta:\beta<\alpha\}$ satisfies
$|\scr{V}|\leq\kappa$ and $\Meet\scr{V}\neq\es$, then
$\eta(\Meet\scr{V})\in A_\alpha$.
\end{enumerate}
To justify the above construction, let us assume to have already
defined the $\scr{C}$-saturated sets $\{A_\beta:\beta<\alpha\}$
and the open sets $\{U_\beta:\beta<\alpha\}$. Since
$\alpha<\lambda$ and $\lambda\leq\left(2^{\kappa}\right)^+$, we
have $|\{U_\beta:\beta<\alpha\}|\leq |\alpha|\leq 2^\kappa$.
Consequently, the set
$B=\{\eta(\Meet\scr{V}):\scr{V}\sse\{U_\beta:\beta<\alpha\},|\scr{V}|\leq\kappa\textup{ and }\Meet\scr{V}\neq\es\}$ has
cardinality not exceeding $2^\kappa$. Let
$A_\alpha=\Un\{S(x):x\in B\}\un\Un\{A_\beta:\beta<\alpha\}$ and
note that $|A_\alpha|\leq 2^\kappa$. As we are assuming that
$d(X)>2^\kappa$, we may find a non-empty open set $U_\alpha$ such
that $\closure{A_\alpha}\meet\closure{U_\alpha}=\es$.

Since $\lambda$ is a caliber of $X$, there exists a set
$E\sse\lambda$ such that $|E|=\lambda$ and
$\Meet\{U_\alpha:\alpha\in E\}\neq\es$. We may fix an increasing
mapping $f:\lambda\to E$. Observe now that we are assuming
$\kappa^+\leq\lambda$. For any $\alpha<\kappa^+$ let
$x_\alpha=\eta(\Meet\{U_{f(\xi)}: \xi\leq\alpha\})$. If
$\beta<\alpha<\kappa^+$, then $x_\beta\in A_{f(\beta)+1}\sse
A_{f(\alpha)}$ and $x_\alpha\in U_{f(\alpha)}$ and so
$x_\beta\neq x_\alpha$. Consequently, the set
$Y=\{x_\alpha:\alpha<\kappa^+\}$ has cardinality $\kappa^+$ and
therefore, by $L(X)\leq\kappa$, there exists a complete
accumulation point $p$ for $Y$.

As pointed out in Lemma 2.7 of \cite{Car2018},
$\closure{\Un\{A_{f(\alpha)}:\alpha<\kappa^+\}}=\Un\{\closure{A_-
{f(\alpha)}:\alpha<\kappa^+\}}$. As
$Y\sse\Un\{A_{f(\alpha)}:\alpha<\kappa^+\}$, there exists some
$\gamma<\kappa^+$ such that $p\in\closure{A_{f(\gamma)}}\sse
X\minus\closure{U_{f(\gamma)}}$. Moreover, for each
$\gamma\leq\beta<\kappa^+$ the set $U_{f(\gamma)}$ occurs in the
definition of $x_\beta$ and consequently $x_\beta\in
U_{f(\gamma)}$. This means that the set
$W=X\minus\closure{U_{f(\gamma)}}$ is a neighborhood of $p$ such
that $|W\meet Y|\leq\kappa$, in contrast to the choice of $p$.
This completes the proof.
\end{proof}

For a space $X$ and $A\sse X$, the $\theta$-\emph{closure} of $A$
is defined as $cl_\theta(A)=\{x\in X:\closure{U}\meet
A\neq\es\textup{ whenever }U\textup{ is an open set containing
}x\}$. A set $D\sse X$ is $\theta$-\emph{dense} if
$X=cl_\theta(D)$ and the $\theta$-\emph{density} of $X$ is is
defined as $d_\theta(X)=\min\{|D|:D\textup{ is }\theta\textup{
dense in }X\}$. Dropping the assumption of quasiregularity, the
same proof above implies a similar result concerning
$d_\theta(X)$.

\begin{theorem}\label{calibertheta}
Let $X$ be a space and let $\kappa=L(X)wt(X)$. If a cardinal
$\lambda$ satisfying $\kappa<\lambda\leq
\left(2^{\kappa}\right)^+$ is a caliber of $X$, then
$d_\theta(X)\leq 2^{\kappa}$.
\end{theorem}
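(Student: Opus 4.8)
The plan is to imitate the proof of Theorem~\ref{caliber} verbatim, checking at each point that quasiregularity was only used where a genuine open neighborhood appeared, and replacing such uses by passing to the $\theta$-closure. First I would assume by way of contradiction that $d_\theta(X)>2^\kappa$ and fix a choice function $\eta$ as before. The only place the hypothesis $d(X)>2^\kappa$ was used in Theorem~\ref{caliber} was to produce, given a $\scr{C}$-saturated set $A_\alpha$ of size at most $2^\kappa$, a non-empty open set $U_\alpha$ with $\closure{A_\alpha}\meet\closure{U_\alpha}=\es$; here I replace this by the observation that $d_\theta(X)>2^\kappa$ guarantees $cl_\theta(A_\alpha)\neq X$, so there is a point outside $cl_\theta(A_\alpha)$, hence a non-empty open $U_\alpha$ with $\closure{U_\alpha}\meet A_\alpha=\es$, which is exactly what I need since $A_\alpha$ being $\scr{C}$-saturated and the later argument only use that $\closure{U_{f(\gamma)}}$ misses (a tail of) the sequence $Y\sse\Un\{A_{f(\alpha)}\}$. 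I should double-check that $cl_\theta(A_\alpha)\neq X$ does follow from $d_\theta(X)>2^\kappa$: if $A_\alpha$ were $\theta$-dense we would get $d_\theta(X)\leq|A_\alpha|\leq 2^\kappa$, a contradiction, so indeed $cl_\theta(A_\alpha)\subsetneq X$.

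Next I would carry out the induction exactly as in Theorem~\ref{caliber}: at stage $\alpha$, having defined $\{A_\beta,U_\beta:\beta<\alpha\}$, I use $\lambda\leq(2^\kappa)^+$ to bound $|\{U_\beta:\beta<\alpha\}|\leq 2^\kappa$, form the set $B$ of all $\eta(\Meet\scr{V})$ for $\scr{V}\in[\{U_\beta:\beta<\alpha\}]^{\leq\kappa}$ with $\Meet\scr{V}\neq\es$, set $A_\alpha=\Un\{S(x):x\in B\}\un\Un\{A_\beta:\beta<\alpha\}$, note $|A_\alpha|\leq 2^\kappa$, and choose $U_\alpha$ non-empty open with $\closure{U_\alpha}\meet A_\alpha=\es$ using the paragraph above. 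This gives the two inductive properties (1') $\closure{U_\alpha}\meet A_\alpha=\es$ (slightly stronger than the original (1), but that is fine and in fact convenient) and (2) if $\scr{V}\sse\{U_\beta:\beta<\alpha\}$, $|\scr{V}|\leq\kappa$, $\Meet\scr{V}\neq\es$, then $\eta(\Meet\scr{V})\in A_\alpha$.

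Then I would invoke the caliber hypothesis to get $E\sse\lambda$ with $|E|=\lambda$ and $\Meet\{U_\alpha:\alpha\in E\}\neq\es$, fix an increasing $f:\lambda\to E$ (using $\kappa^+\leq\lambda$), and for $\alpha<\kappa^+$ put $x_\alpha=\eta(\Meet\{U_{f(\xi)}:\xi\leq\alpha\})$; as before $x_\beta\in A_{f(\beta)+1}\sse A_{f(\alpha)}$ and $x_\alpha\in U_{f(\alpha)}$ whenever $\beta<\alpha$, so $Y=\{x_\alpha:\alpha<\kappa^+\}$ has size $\kappa^+$, and by $L(X)\leq\kappa$ there is a complete accumulation point $p$ of $Y$. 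Using Lemma~2.7 of~\cite{Car2018} again, $\closure{\Un\{A_{f(\alpha)}:\alpha<\kappa^+\}}=\Un\{\closure{A_{f(\alpha)}}:\alpha<\kappa^+\}$, so there is $\gamma<\kappa^+$ with $p\in\closure{A_{f(\gamma)}}$; since $\closure{U_{f(\gamma)}}\meet A_{f(\gamma)}=\es$ by property (1'), we get $p\notin\closure{U_{f(\gamma)}}$, so $W=X\minus\closure{U_{f(\gamma)}}$ is an open neighborhood of $p$. But for every $\gamma\leq\beta<\kappa^+$ the set $U_{f(\gamma)}$ occurs in the definition of $x_\beta$, so $x_\beta\in U_{f(\gamma)}\sse\closure{U_{f(\gamma)}}$, whence $W\meet Y\sse\{x_\beta:\beta<\gamma\}$ has size at most $\kappa$, contradicting that $p$ is a complete accumulation point of a set of size $\kappa^+$. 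This contradiction gives $d_\theta(X)\leq 2^\kappa$.

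The only subtle point — and the one I would be most careful about — is the swap of $\closure{A_\alpha}\meet\closure{U_\alpha}=\es$ (as stated in Theorem~\ref{caliber}, which needs quasiregularity to separate the closed set $\closure{A_\alpha}$ from a point, picking $V$ with $\closure{V}\sse X\minus\closure{A_\alpha}$) for the weaker-looking but for our purposes equally good $\closure{U_\alpha}\meet A_\alpha=\es$, which is available from $\theta$-density alone with no regularity at all. Since the endgame only ever uses that a tail of $Y$ lies inside $U_{f(\gamma)}$ and that $p\in\closure{A_{f(\gamma)}}$ forces $p\notin\closure{U_{f(\gamma)}}$, property (1') suffices, and nothing else in the argument touches separation axioms. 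Everything else — the cardinal arithmetic, the $\scr{C}$-saturation bookkeeping via $S(\cdot)$, and the complete-accumulation-point argument — transfers without change.
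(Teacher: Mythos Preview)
Your overall plan matches the paper's one-line indication that the argument of Theorem~\ref{caliber} goes through once density is replaced by $\theta$-density, and your bookkeeping with $\scr{C}$-saturated sets, the caliber, and the complete accumulation point is fine. However, there is a genuine gap in your endgame. From property~(1$'$), namely $\closure{U_{f(\gamma)}}\meet A_{f(\gamma)}=\es$, together with $p\in\closure{A_{f(\gamma)}}$, you conclude $p\notin\closure{U_{f(\gamma)}}$. This inference is invalid: $\closure{U_{f(\gamma)}}\meet A_{f(\gamma)}=\es$ only says $A_{f(\gamma)}\sse X\minus\closure{U_{f(\gamma)}}$, an \emph{open} set, and the closure of a subset of an open set need not remain inside that open set. (In $\R$, take $A=(0,1)$ and $U=(1,2)$: then $\closure{U}\meet A=\es$ yet $1\in\closure{A}\meet\closure{U}$.) Relatedly, your parenthetical that (1$'$) is ``slightly stronger than the original (1)'' is backwards: (1$'$) is strictly \emph{weaker} than (1), since $A_\alpha\sse\closure{A_\alpha}$; that gap in strength is exactly why quasiregularity was invoked in Theorem~\ref{caliber} to obtain $\closure{A_\alpha}\meet\closure{U_\alpha}=\es$ rather than merely $A_\alpha\meet\closure{U_\alpha}=\es$.

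The paper itself does not spell out this step; it simply asserts that the same proof yields the $\theta$-density statement. So the burden falls on you to supply the missing piece. You need, for some $\gamma$, an \emph{open} neighborhood $W$ of $p$ with $|W\meet Y|\leq\kappa$, and your candidate $W=X\minus\closure{U_{f(\gamma)}}$ is not yet known to contain $p$. One way to repair this is to sharpen the use of Lemma~2.7 of~\cite{Car2018} (or the saturation of the $A_\alpha$'s) to place $p$ not merely in $\closure{A_{f(\gamma)}}$ but in a set whose intersection with $\closure{U_{f(\gamma)}}$ is controlled; another is to adjust the inductive choice of $U_\alpha$ so that it avoids something closed rather than just $A_\alpha$. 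As written, though, the displayed deduction does not stand.
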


For a space $X$, the \emph{linear Lindel\"of degree} $lL(X)$
\cite{bella17} is
the least infinite cardinal $\kappa$ such that every increasing
open cover of $X$ has a subcover of size at most $\kappa$.
 $X$ is
\emph{linearly Lindel\"of} if $lL(X)$ is countable. It is clear
that $lL(X)\leq L(X)$. It is well-known that if $lL(X)\leq\kappa$
for a cardinal $\kappa$ then every set of cardinality $\kappa^+$
has a complete accumulation point. Examining the proofs of
Theorems~\ref{caliber} and \ref{calibertheta}, we see that both
still hold if $L(X)$ is subsituted with $lL(X)$.

\end{document}